\title{A new approach to topological T-duality for principal torus bundles}
\author{Tom Dove and Thomas Schick}
\date{\today}
\address{Mathematisches Institut, Universit\"at G\"ottingen, Bunsenstr. 3-5, 37073 G\"ottingen, Germany}
\email{tomjdove@gmail.com}
\email{thomas.schick@math.uni-goettingen.de}
\theoremstyle{plain}
\newtheorem{theorem}{Theorem}[section]
\newtheorem*{theorem*}{Theorem}
\newtheorem{lemma}[theorem]{Lemma}
\newtheorem{proposition}[theorem]{Proposition}
\newtheorem{corollary}[theorem]{Corollary}
\theoremstyle{definition}
\newtheorem{example}[theorem]{Example}
\newtheorem{definition}[theorem]{Definition}
\newcommand{\C}{\mathbb{C}}
\newcommand{\F}{\mathcal{F}}
\newcommand{\Z}{\mathbb{Z}}
\newcommand{\complexs}{\mathbb{C}}
\DeclareMathOperator{\cyl}{Cyl}
\DeclareMathOperator{\id}{id}
\DeclareMathOperator{\pr}{pr}
\DeclareMathOperator{\im}{im}
\DeclareMathOperator{\thom}{Thom}
\DeclareMathOperator{\Hom}{Hom}
\begin{document}

\onehalfspacing

\begin{abstract}
We introduce a new `Thom class' formulation of topological T-duality for
principal torus bundles. This definition is equivalent to the established one
of Bunke, Rumpf, and Schick but has the virtue of removing the global
assumptions on the H-flux required in the old definition.
With the new definition, we provide easier and more transparent proofs of the
classification of T-duals and generalise the local formulation of T-duality
for circle bundles by
Bunke, Schick, and Spitzweck to the torus case. 
\end{abstract}

\maketitle

\tableofcontents

\section{Introduction}\label{sec:introduction}

Topological T-duality over a base space $X$ is a relationship between two pairs $(E, G)$ and $(\hat E,\hat G)$ consisting of $T^n$-bundles $E,\hat E \to
X$ equipped with gerbes $G \to E$ and $\hat G\to \hat E$.
Such a relation models the underlying topology of T-duality in string theory; $E$ is the background space-time in which strings propagate and $G$ is the H-flux representing the B-field.
T-duality is of interest to topologists due to the T-duality transformation: a T-duality relation between $(E, G)$ and $(\hat E, \hat G)$ comes with an isomorphism (with degree shift) between the twisted K-theory groups of $E$ and $\hat E$.

T-duality is locally described by the Buscher rules, which describe how the metric and connection change under T-duality.
It was known that such transformations change the global topology of the background space-time, but it was not until the introduction of topological T-duality in \cite{BEM} that a systematic method was introduced to determine this change.
It was observed that, under T-duality, there is an exchange of the Chern class with the fiberwise integral of the H-flux.
A formal framework for topological T-duality and the T-duality transformation was then introduced in \cite{BunkeSchick05} for the circle case and \cite{BunkeRumpfSchick} for the torus case.
The former defined T-duality via a Thom class on the $S^3$-bundle associated with $E$ and $\hat E$, whereas the latter took a different approach, using an isomorphism between the two gerbes pulled back to $E \times_X \hat E$.
Included in these papers is a proof of the existence and uniqueness properties of T-duals: in the circle case, a unique T-dual always exists, but in the torus case T-duals may not exist and are rarely unique when they do.
Bunke, Rumpf, and Schick prove an explicit description of the classification of T-duals for principal $T^n$-bundles.

In this paper, we introduce a new `Thom class' definition of T-duality that is a direct generalisation of the Thom class approach for circle bundles in \cite{BunkeSchick05} and has significant advantages over the definition made in \cite{BunkeRumpfSchick}.
Firstly, it requires no assumptions on the pair $(E, G)$. 
This fact is used to show that the global assumptions made on the H-flux in \cite{BunkeRumpfSchick} are not required.
Instead, they are a consequence of the (new) local definition.
Secondly, with this new definition the classification results in \cite{BunkeRumpfSchick} become more transparent and can be proved directly with basic algebraic topology.

Let us describe the formulation of T-duality in \cite{BunkeRumpfSchick}.
To state the definition, we must assume that the H-fluxes are trivial when restricted to the fibers of their respective torus bundles.
T-duals can exist for pairs where this is not true, but then one obtains a `non-classical' T-dual consisting of a bundle of noncommutative tori \cite{MathaiRosenberg05}.
 Two pairs $(E, G)$ and $(\hat E, \hat G)$ are \emph{T-dual} if they belong to a T-duality diagram or equivalently a T-duality triple, defined as follows:

\begin{definition}\label{defn}
A \emph{T-duality diagram} over a space $X$ is a diagram of the following form:
\begin{equation}\label{Tdiagram}
\begin{tikzcd}[column sep={4em,between origins},row sep=1em]
 & p^*G \arrow[ld] \arrow[rd] \arrow[rr, "u"] & & \hat p^* \hat G \arrow[ld] \arrow[rd] & \\
G \arrow[rd] & & E \times_X \hat E \arrow[ld, "p", swap] \arrow[rd, "\hat p"] & & \hat G \arrow[ld] \\
 & E \arrow[rd, "\pi", swap] & & \hat E \arrow[ld, "\hat \pi"] & \\
 & & X &&
\end{tikzcd}
\end{equation}
This must satisfy the \emph{Poincar\'e bundle condition}: For each $x \in X$, the automorphism
	\[
	u(x) = \big( 0 \xrightarrow{ p^* v^{-1}} p^*G|_{E_x} \xrightarrow{ u|_{E_x \times \hat E_x}} \hat p^* \hat G|_{\hat E_x} \xrightarrow{ \hat p^* \hat v} 0 \bigr),
	\]
	 where $v \colon G|_{E_x} \to 0$ and $\hat v \colon \hat G|_{\hat E_x} \to 0$ are chosen trivialisations, gives the following specified class:
	\[
		\bigl[u(x) \bigr] = \Bigl[ \sum_{i=1}^n y_i \cup \hat y_i \Bigr] \in H^2(E_x \times \hat E_x) / H^2(E_x) \oplus H^2(\hat E_x) .
	\]
    The quotient is taken so that the class is independent of the chosen
    trivialisations. 
    The triple $\bigl( (E, G), (\hat E, \hat G), u \bigr)$ is called a \emph{T-duality triple}, and the two pairs $(E, G)$ and $(\hat E, \hat G)$ are \emph{T-dual} if they belong to a T-duality triple. 
\end{definition}

This definition relies on the fact that the group of automorphisms of the
trivial gerbe on a space $X$ is canonically isomorphic to $H^2(X)$.
More generally, the set of morphisms between any two gerbes on $X$ forms a $H^2(X)$-torsor.
T-duality can be formulated using other models for the H-flux instead of gerbes, for example principal $PU(\mathcal H)$-bundles.
These models all share analogous relationships with degree 2 and degree 3 integral cohomology; see the axiomatic notion of twists in \cite{BunkeSchick05}*{\textsection 3}.

Definition \ref{defn} is the same as in \cite{BunkeRumpfSchick} except that they make a further assumption on the Dixmier-Douady classes of the gerbes. The degree 3 cohomology of $E$ has a filtration
\[
0 \subseteq \F^3H^3(E) \subseteq \F^2 H^3(E) \subseteq \F^1 H^3(E) \subseteq H^3(E)
\]
related to the Leray-Serre spectral sequence for $\pi \colon E \to B$: $\F^j
H^3(E)$ is the kernel of the pullback map to $E^{(j-1)}$, where $E^{(j)}$ is the
pullback of $E$ to the $j$-skeleton of $X$ (provided $X$ is a CW-complex).
The group $H^3(\hat E)$ has the corresponding filtration. 
The extra assumption imposed on the H-flux in \cite{BunkeRumpfSchick} is that the classes $[G]$ and $[\hat G]$ belong to $\F^2H^3$ and their leading parts satisfy
\begin{equation}\label{condition}
	[G]^{2,1} = \Bigl[ \sum_{i=1}^n y_i \otimes \hat c_i\Bigr] \in {}^{\pi} E^{2,1}_\infty
	\quad \text{and} \quad
	[\hat G]^{2,1} = \Bigl[ \sum_{i=1}^n \hat y_i \otimes c_i \Bigr] \in {}^{\hat \pi} E^{2,1}_\infty.
\end{equation}
Here, $c_i$ and $\hat c_i$ denote the Chern classes of the bundles $E$ and $\hat E$, respectively, and $y_i$ and $\hat y_i$ are generators of the cohomology of the fiber of these bundles.
The groups ${}^{\pi} E^{2,1}_\infty=\F^2H^3(E)/\F^3H^3(E)$ and ${}^{\hat\pi}
E^{2,1}_\infty$ belong to the spectral sequence for the two torus bundles, and `leading parts' refers to the image of $[G]$ and $[\hat G]$ in these two groups. 

In the T-duality of circle bundles one has that $\pi_!\bigl([G]\bigr) = \hat c$ and  $\hat\pi_!\bigl([\hat G]\bigr) = c$, where $c$ and $\hat c$ are the Chern classes of $E$ and $\hat E$, respectively \cites{BEM,BunkeSchick05}.
Condition \eqref{condition} is a generalisation of this to higher dimensions; the H-flux carries information about the Chern classes of the dual bundle.
Moreover, the existence of different representatives of a class in ${}^\pi E^{2,1}_\infty$ is related to the non-uniqueness of the T-duals of $(E, G)$, see \cite{BunkeRumpfSchick}*{\textsection 2.20}.

In the next section, we introduce an equivalent formulation of Definition \ref{defn} that is a direct generalisation of the Thom class definition of T-duality in \cite{BunkeSchick05}.
With this definition, we show that these assumptions on the H-flux are not necessary.
Specifically, in Section \ref{sec:globalcondition} we show that the H-fluxes
belonging to a T-duality triple automatically satisfy \eqref{condition}.

In the case of principal $S^1$-bundles, the T-dual of a pair $(E, G)$ always exists and is unique; it is characterised by the relations $\pi_!\bigl([G]\bigr) = \hat c_1$ and $\hat \pi_!\bigl([\hat G]\bigr) = c_1$.
For torus bundles of higher dimension, this is no longer true.
T-duals need not exist and if they exist they are likely not unique.
In addition to their formulation of T-duality, the authors of \cite{BunkeRumpfSchick} provide precise existence and uniqueness results, detailing criteria for when T-duals exist and a description of what the T-duals are.
Their proof is based on the construction of a classifying space for T-duality triples and a universal T-duality diagram.
With the new definition introduced in this paper, we give alternative proofs
of these results that come directly from the definition of T-duality and only use basic algebraic topology.

In particular, concerning existence we show:

\begin{theorem*}[Theorem \ref{thm:existence}]
A pair $(E, G)$ over a CW-complex $X$ has a T-dual if and only if $[G] \in \F^2 H^3(E)$.
\end{theorem*}

Regarding (non)-uniqueness of the dual bundle we have in particular:

\begin{theorem*}[Theorem \ref{thm:uniqueness1}]
If $(E, G)$ and $(\hat E, \hat G)$ are T-dual $T^n$-bundles over $X$, then for every antisymmetric matrix $B \in \operatorname{Mat}(n,n,\Z)$ there exists a pair $(\hat E', \hat G')$ with $\hat c_i' = \hat c_i + \sum_{j=1}^n B_{ij}c_j$ that is also T-dual to $(E, G)$.
Moreover, every T-dual of $(E, G)$ satisfies this for some $B$.
\end{theorem*}

We prove the existence results in Section \ref{sec:existence} and the uniqueness results in Section \ref{sec:uniqueness}.

 The Poincar\'e bundle condition in Definition \ref{defn} is a fiberwise condition, and so the removal of the extraneous global assumptions on the H-flux allows a purely local definition of T-duality.
 We demonstrate this in Section \ref{sec:localtduality} by generalising the local formulation of T-duality in \cite{BSS} to principal torus bundles.
 This definition in particular highlights that T-duality for $T^n$-bundles is locally an $n$-fold product of T-duality for circle bundles. 

 Being an important aspect of T-duality, we discuss the T-duality
 transformation in Section \ref{sec:Ttransformation}. To illustrate this, we
 provide some interesting examples and applications of the T-duality
 isomorphism. Our local perspective on T-duality also is used in the author's
 proof that the T-duality transformation is an isomorphism in equivariant
 K-theory for arbitrary actions of a compact group \cite{DoveSchick}.

\section{A Thom class formulation of T-Duality}
\label{sec:thomclassdef}

The main observation for our new approach to T-duality is that there is a bijection between the gerbe isomorphisms $p^* G \to \hat p^* \hat G$ and certain gerbes on the fiberwise join of and $\hat E$, which can be explicitly constructed as
\[
J := \cyl(p) \cup_{E \times_X \hat E} \cyl(\hat p).
\]
In words: we construct the mapping cylinders of the two projections $p \colon E \times_X \hat E \to E$ and $\hat p \colon E \times_X \hat E \to \hat E$ and glue them along the shared copy of $E \times_X \hat E$.
The resulting space can be thought of as a `cylinder' in which each interior slice is a copy of $E \times_X \hat E$ and the ends are copies of $E$ and $\hat E$.
It is a fiber bundle over $X$ whose typical fiber is the join $T^n * T^n$.
The following diagram commutes up to homotopy:
\begin{equation}\label{inclusiondiagram}
\begin{tikzcd}
 &  J  & \\
E \arrow[ru, hook] & E \times_X \hat E \arrow[l, "p"] \arrow[u, "i", hook] \arrow[r, "\hat p", swap]  & \hat E \arrow[lu, hook, swap]
\end{tikzcd}
\end{equation}
The homotopies are those that pull the central $E \times_X \hat E$ slice to either end of the cylinder.
We say that a gerbe $G_J \to J$ restricts to $G$ and $\hat G$ if the pullbacks of $G_J$ to $E$ and $\hat E$ are isomorphic to $G$ and $\hat G$, respectively.

\begin{proposition}\label{prop:bijection}
Let $G \to E$ and $\hat G \to \hat E$ be gerbes.
There is a bijection between the set of gerbe isomorphisms $p^* G \to \hat p^* \hat G$ and the set of isomorphism classes of gerbes on $J$ that restrict to $G$ and $\hat G$. 
\end{proposition}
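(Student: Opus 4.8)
The plan is to realise both sides as descent data for the cover of $J$ by the two mapping cylinders and to exhibit mutually inverse maps between them. Thicken the closed mapping cylinders to open sets $U \supset \cyl(p)$ and $\hat U \supset \cyl(\hat p)$ with $J = U \cup \hat U$; the deformation retractions give homotopy equivalences $r \colon U \to E$ and $\hat r \colon \hat U \to \hat E$, while $U \cap \hat U$ deformation retracts onto the central slice $E \times_X \hat E$ via the inclusion $i$. Since gerbes form a stack, a gerbe on $J$ is the same datum as a gerbe on $U$, a gerbe on $\hat U$, and an isomorphism of their restrictions over $U \cap \hat U$. The key geometric input is \eqref{inclusiondiagram}: the composite of $i$ with the inclusion $E \hookrightarrow J$ is homotopic to $p$ (and with $\hat E \hookrightarrow J$ to $\hat p$), so that $r \circ i \simeq p$ and $\hat r \circ i \simeq \hat p$.

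For the forward map, start with an isomorphism $u \colon p^* G \to \hat p^* \hat G$. Pulling $G$ back along $r$ gives a gerbe $r^* G$ on $U$ with $(r^* G)|_E \cong G$, and homotopy invariance of the gerbe pullback together with $r \circ i \simeq p$ supplies a canonical isomorphism $(r^* G)|_{U \cap \hat U} \cong p^* G$. Forming $\hat r^* \hat G$ on $\hat U$ analogously, with $(\hat r^* \hat G)|_{U \cap \hat U} \cong \hat p^* \hat G$, we glue the two gerbes over the overlap using $u$. The stack property produces a gerbe $G_J(u)$ on $J$, and by construction $G_J(u)|_E \cong G$ and $G_J(u)|_{\hat E} \cong \hat G$, so it restricts to $G$ and $\hat G$.

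For the inverse, let $G_J$ be a gerbe on $J$ that restricts to $G$ and $\hat G$. Restricting to the central slice and using the two homotopies of \eqref{inclusiondiagram} yields isomorphisms $p^* G \cong i^* G_J \cong \hat p^* \hat G$, whose composite is an isomorphism $u(G_J) \colon p^* G \to \hat p^* \hat G$. That the two assignments are mutually inverse is then a matter of unwinding the definitions: the gluing isomorphism used to build $G_J(u)$ is precisely the map recovered over the central slice, so $u(G_J(u)) = u$; conversely, reassembling the descent datum of an arbitrary $G_J$ returns $G_J$.

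The main obstacle is the bookkeeping of isomorphisms needed to make this a genuine bijection rather than merely a surjection. To this end one tracks the restriction isomorphisms $G_J|_E \cong G$ and $G_J|_{\hat E} \cong \hat G$ as part of the data and shows that, relative to these, the gluing isomorphism is a complete and faithful invariant: distinct $u$ yield non-isomorphic glued gerbes, and every such gerbe arises from exactly one $u$. This rests on the fact that a gerbe on the cylinder $U$ together with an identification of its restriction to the end $E$ with $G$ admits no nontrivial automorphism compatible with that identification, because restriction to the end induces an isomorphism $H^2(U) \xrightarrow{\sim} H^2(E)$; the same holds on $\hat U$. Consequently the only residual freedom in the descent datum is $u$ itself. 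The remaining technical point is to verify that homotopy invariance of the gerbe pullback supplies the isomorphisms over $U \cap \hat U$ coherently, so that the identifications coming from $r \circ i \simeq p$ and $\hat r \circ i \simeq \hat p$ are the intended ones and the two constructions are strictly inverse.
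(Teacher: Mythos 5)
Your construction is the same as the paper's: glue the pullbacks of $G$ and $\hat G$ over the two mapping cylinders along $u$, and recover $u$ from a gerbe on $J$ by restricting to the central slice and using the homotopies of diagram \eqref{inclusiondiagram}. You supply more detail than the paper does — in particular the descent/stack argument and the observation that $H^2(U)\xrightarrow{\sim}H^2(E)$ kills residual automorphisms relative to the end identifications, which is exactly the point the paper's ``one checks'' elides — so this is a correct and somewhat more careful rendering of the same proof.
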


\begin{proof}
We have canonical maps $\cyl(p) \to E$ and $\cyl(\hat p) \to \hat E$. An isomorphism $u \colon p^* G \to \hat p^* \hat G$ can be used to glue the pullbacks of $G$ and $\hat G$ together along these canonical maps.
These pullbacks restrict to $G$ and $\hat G$ on the end that has not been glued, so we get a suitable gerbe on $J$.

For the other direction, let $G_J$ be a gerbe on $J$ that restricts to $G$ and $\hat G$.
The homotopies in diagram \eqref{inclusiondiagram} give two isomorphisms of gerbes over $E \times_X \hat E$: one between $p^*G$ and $i^*G_J$ and another between $i^*G_J$ and $\hat p^*\hat G$.
Composing these gives the isomorphism we are after. 

One checks that these constructions are inverse to each other (up to canonical
and natural isomorphism).
\end{proof}

With this proposition, we can consider a new kind of T-duality diagram that contains the same information as \eqref{Tdiagram}:
\begin{equation}\label{JTdiagram}
\begin{tikzcd}[column sep={5em,between origins},row sep=2em]
 & G_J \arrow[d] & \\
G \arrow[ru, hook] \arrow[d] & J & \hat G \arrow[d] \arrow[lu, hook]\\
E \arrow[ru, hook] \arrow[rd] & E \times_X \hat E \arrow[u, hook] \arrow[l] \arrow[r] \arrow[d] & \hat E \arrow[ld] \arrow[lu,  hook] \\
 & X & 
\end{tikzcd}
\end{equation}
There is now the question of how to interpret the Poincar\'e bundle condition in this context.
This will be expressed as a condition on the characteristic class of the gerbe $G_J$. 

The join $T^n * T^n$ can be decomposed into two open sets that each
deformation retract onto the two copies of $T^n$ that are joined and whose intersection is homotopy equivalent to $T^n \times T^n$.
The corresponding Mayer-Vietoris sequence is
\begin{equation}\label{MayerVietoris}
\dotsm \to H^k(T^n) \oplus H^k(T^n) \to H^k(T^n \times T^n) \to H^{k+1}(T^n * T^n) \to \dotsm.
\end{equation}
The map $H^k(T^n) \oplus H^k(T^n) \to H^k(T^n \times T^n)$ is injective for
all $k>0$, implying that we then have exact sequences
\[
0 \to H^k(T^n) \oplus H^k(T^n) \to H^k(T^n \times T^n) \to H^{k+1}(T^n * T^n) \to 0.
\]
In particular,
\begin{equation}\label{eqn:cohomtorusjoin}
H^3(T^n * T^n) \cong H^2(T^n \times T^n) / H^2(T^n) \oplus H^2(T^n).
\end{equation}
Notice that this group contains the class $[u(x)]$ introduced in Definition \ref{defn}.
For $k \in \{0,1\}$ the map $H^k(T^n) \oplus H^k(T^n) \to H^k(T^n \times T^n)$ is also surjective, implying that 
\[
H^1(T^n * T^n) = H^2(T^n * T^n) = 0.
\]
Using the Leray-Serre spectral sequence of $J \to X$ we then calculate that
the projection induces isomorphisms $H^1(J) \cong H^1(X)$, $H^2(J) \cong
H^2(X)$, and that $H^3(J)$ fits into an exact sequence
\begin{equation}\label{exactsequenceJ}
0 \to H^3(X) \to H^3(J) \xrightarrow{\iota^*} H^3(T^n * T^n) \to H^4(X) \to 0.
\end{equation}
The center map $\iota^*$, which is the restriction to the fiber of $J \to X$, provides the connection between the local and global conditions.

\begin{definition}
A Thom class for J is a class in $H^3(J)$ that maps to 
\[
\Bigl[ \sum_{i=1}^n y_i \cup \hat y_i \Bigr] \in H^3(T^n * T^n)
\]
under the homomorphism $\iota^*$ in \eqref{exactsequenceJ}, where $y_i$ and $\hat y_i$ are the generators of $H^2(T^n \times T^n)$ and we have made the identification \eqref{eqn:cohomtorusjoin}.
The set of Thom classes for $J$ will be denoted by $\thom(J)$. Note that
\eqref{exactsequenceJ} implies that $\thom(J)$ is an $H^3(X)$-torsor if it is non-empty.
\end{definition}

We are now prepared for a new notion of topological T-duality.

\begin{definition}\label{def:ThomTduality}
A \emph{T-duality triple} is a triple $\bigl((E, G), (\hat E, \hat G), G_J \bigr)$ where $G_J$ is a gerbe on $J$ that restricts to $G$ and $\hat G$ and such that $[G_J]$ is a Thom class for $J$. 
The pairs $(E, G)$ and $(\hat E, \hat G)$ are T-dual if they belong to a T-duality triple.
\end{definition}

Let us explain how this is a direct generalisation of the definition of T-duality for circle bundles in \cite{BunkeSchick05}.
For $n=1$, $J$ is an $S^1 * S^1 \cong S^3$-bundle over $X$.
It is the sphere bundle of the vector bundle $L \oplus \hat L$, where $L$ and $\hat L$ are the line bundles associated with $E$ and $\hat E$, respectively.
In this case, a Thom class for $J$ is an honest Thom class because $y \cup
\hat y$ is a generator of $H^3(S^3)\cong H^2(S^1\times S^1)$.

Before showing that this new definition is equivalent to the previous one, we observe that, unlike the previous definition, we did not need to assume that the H-fluxes are trivialisable when restricted to the $T^n$-fibers.
It instead follows from the definition:

\begin{theorem}\label{thm:F1page}
Let $\bigl((E, G), (\hat E, \hat G), G_J \bigr)$ be a T-duality triple. 
Then $G$ (resp. $\hat G$) is trivialisable when restricted to the fiber of $E \to X$ (resp. $\hat E \to X$).
\end{theorem}

\begin{proof}
We have the following commutative diagram induced by inclusion maps:
\[
\begin{tikzcd}
    H^3(J)
        \rar \dar
    & H^3(E)
        \dar
    \\
    H^3(T^n * T^n)
        \rar
    & H^3(T^n)
\end{tikzcd}
\]
By considering the Mayer-Vietoris sequence \eqref{MayerVietoris}, and again noting that the map $H^k(T^n) \oplus H^k(T^n) \to H^k(T^n \times T^n)$ is injective, we conclude that the lower horizontal map is the zero.
Thus, the class $[G]$, which is the image of $[G_J]$, is zero when restricted to the fiber $T^n$.
\end{proof}

Now, let us show that the two definitions are equivalent.

\begin{theorem}
Definition \ref{defn} and Definition \ref{def:ThomTduality} are equivalent definitions of T-duality.
\end{theorem}

\begin{proof}
Consider the following:
\begin{center}
\begin{tikzcd}
    \{\text{Automorphisms of the trivial gerbe over $T^n \times T^n$}\} 
        \arrow[d, <->, "\text{Proposition \ref{prop:bijection}}", swap] \arrow[r,"\cong"] 
    &  H^2(T^n \times T^n) 
        \arrow[d, "\eqref{eqn:cohomtorusjoin}"] 
    \\
    \{\text{Gerbes on $T^n * T^n$ restricting to $G|_{T^n}$ and $\hat G|_{T^n}$}\} 
        \arrow[r]
    & H^3(T^n * T^n)
    \\
    \{\text{Gerbes on $J$ restricting to $G$ and $\hat G$}\}
        \arrow[u, "\text{Restrict to fiber}"] \arrow[r]
    & H^3(J) 
        \arrow[u, "\eqref{exactsequenceJ}", swap]
\end{tikzcd}
\end{center}
The double-sided vertical arrow is a bijection that relies on a choice of trivialisation of $G|_{T^n}$ and $\hat G|_{T^n}$ together with Proposition \ref{prop:bijection}.
The upper horizontal map uses that automorphisms of the trivial gerbe are classified by degree 2 cohomology.
The remaining horizontal maps send a gerbe to its Dixmier-Douady class.
The two vertical maps on the right-hand side are labelled with the previously discussed equations they come from.
If this diagram commutes, then the result is proved:  $G_J$ in the bottom left is sent to $u$ in the top left, so that if the diagram commutes then they are both mapped to the same element in $H^3(T^n * T^n)$.
Then $[G_J]$ is sent to $\bigl[\sum_{i=1}^n y_i \cup \hat y_i \bigr] \in H^3(T^n * T^n)$ if and only if $u$ is as well.

So, let us show that the diagram commutes.
The vertical maps in the lower square are induced by inclusions and these commute with the horizontal characteristic class maps.
Therefore the bottom square commutes and it remains to check that the upper square commutes.
After trivialisations are chosen, this requires showing that the following commutes:
\begin{center}
\begin{tikzcd}
    \{\text{Gerbe isomorphisms $0 \to 0$ over $T^n \times T^n$}\} 
        \arrow[d, <->,"\cong"] \arrow[r, <->,"\cong"] 
    &  H^2(T^n \times T^n) 
        \arrow[d] 
    \\
    \{ \text{Gerbes on $T^n * T^n$ restricting to 0 on both ends} \} 
        \arrow[r] 
    & H^3(T^n * T^n) 
\end{tikzcd}
\end{center}
We can identify the image of the lower map with $H^3(S(T^n \times T^n))$, where $S$ denotes the suspension. 
This is because $S(T^n \times T^n)$ is obtained from $T^n \times T^n$ by collapsing the ends to points, so gerbes on $S(T^n \times T^n)$ are equivalent to gerbes on $T^n * T^n$ that are trivial on each end.
We also have the following commutative diagram:
\begin{center}
\begin{tikzcd}
    \{\text{Gerbe isomorphisms $0 \to 0$ over $T^n \times T^n$}\} 
        \arrow[d, <->] \arrow[r, <->] 
    &  H^2(T^n \times T^n) 
        \arrow[d] 
    \\
    \{ \text{Gerbes on $S(T^n \times T^n)$} \}
        \arrow[r] 
    & H^3(S(T^n \times T^n)) 
\end{tikzcd}
\end{center}
The arrow on the right side is the suspension isomorphism, which is also the boundary map in the Mayer-Vietoris sequence for the suspension.
The diagram commutes by general properties of twists, see \cite{BunkeSchick05}*{\textsection 3.1.2 (3)}.
Using naturality of the Mayer-Vietoris sequence along the collapse map $T^n * T^n \to S(T^n \times T^n)$, we get that the following commutes:
\[
\begin{tikzcd}
    H^2(T^n \times T^n) 
        \arrow[r, "="] \arrow[d]
    & H^2(T^n \times T^n) 
        \arrow[d] 
    \\
    H^3(S(T^n \times T^n))
    \arrow[r]
    & H^3(T^n * T^n).
\end{tikzcd}
\]
The result is then obtained by putting the last two diagrams together and
identifying the set of gerbes on $S(T^n \times T^n)$ with the gerbes on $T^n *
T^n$ that restrict to zero at the two ends.

The choices of trivialisations of $G|_{T^n}$ and $\hat G|_{T^n}$ each form a
$H^2(T^2)$-torsor. Different choices of trivialisation will therefore produce
classes in $H^2(T^n \times T^n)$ that differ by an element in the image of
$H^2(T^n) \oplus H^2(T^n) \to H^2(T^n \times T^n)$. The map $H^2(T^n \times
T^n) \to H^3(T^n * T^n)$ is precisely the quotient by this image, so different choices of trivialisation produce the same element in $H^3(T^n * T^n)$. Thus the commutativity of the required diagram is independent of any choice of trivialisation, and the proof is complete.
\end{proof}

This implies the following corollary, which is mentioned in \cite{BunkeRumpfSchick} without an explicit proof:

\begin{corollary}
The definition of T-duality in \cite{BunkeSchick05} is equivalent to the definition in \cite{BunkeRumpfSchick} restricted to $n=1$.
\end{corollary}

\begin{proof}
This follows from the previous theorem since Definition \ref{def:ThomTduality} is equivalent to the definition in \cite{BunkeSchick05} when $n=1$.
\end{proof}

\section{Existence of T-duals}
\label{sec:existence}

With the Thom class definition of T-duality, it is easy to determine when two bundles $E$ and $\hat E$ can belong to a T-duality triple.
The first part of the following result is not explicitly stated in \cite{BunkeRumpfSchick}, but follows from their construction of the classifying space for T-duality triples.
The second part is essentially \cite{BunkeRumpfSchick}*{Proposition 7.4}.

\begin{theorem}\label{thm:existencetwobundles}
Let $\pi \colon E \to X$ and $ \hat \pi \colon \hat E \to X$ be principal $T^n$-bundles.
There exists gerbes $G \to E$ and $\hat G \to \hat E$ such that $(E ,G)$ and $(\hat E, \hat G)$ are T-dual if and only if
\[
    \sum_{i=1}^n c_i \cup \hat c_i = 0 \in H^4(X),
\]
where $\{ c_i \}_{i=1}^n$ and $\{\hat c_i\}_{i=1}^n$ are the Chern classes classifying $E$ and $\hat E$, respectively.
Moreover, if such a dual pair exists, then any other must be of the form $(E, G \otimes \pi^*Q)$ and $(\hat E, \hat G \otimes \hat \pi^*Q)$ for some gerbe $Q \to X$.
\end{theorem}

\begin{proof}
Given the two principal $T^n$-bundles, we can form their fiberwise join $J$.
Its third cohomology group fits into \eqref{exactsequenceJ}:
\[
    0 \to H^3(X) \to H^3(J) \to H^3(T^n * T^n) \to H^4(X) \to 0.
\]
Using the identification $H^3(T^n*T^n)= H^2(T^n\times T^n)/H^2(T^n)\oplus
H^2(T^n)$, the final map in this sequence is given by
\[
    \Bigl[ \sum_{i,j} a_{i,j}\, y_i \cup \hat y_j \Bigr]
    \longmapsto \sum_{i,j} a_{ij}\, c_i \cup \hat c_j,
\]
where $\{ y_i \}$ and $\{\hat y_i \}$ are the generators of each copy of $H^1(T^n)$.
Therefore, since $\bigl[\sum_{i=1}^n y_i \cup \hat y_i \bigr]$ is mapped to $\sum_{i=1}^n c_i \cup \hat c_i$, there exists a Thom class for $J$ if and only if $\sum_{i=1}^n c_i \cup \hat c_i = 0$.
Given such a class in $H^3(J)$, we can take the corresponding gerbe and restrict it to $E$ and $\hat E$ to get the required gerbes in the T-duality triple.

The set of Thom classes is an $H^3(X)$-torsor, so given any Thom class we can pull back a gerbe on $X$ to get another, and any two Thom classes are related by a unique element of $H^3(X)$.
This implies the second statement of the theorem.
\end{proof}

We now head towards the main existence result.
The question we want to answer is whether a given pair $(E, G)$ has a T-dual.
More specifically, we ask whether there exists a T-duality triple containing $(E, G)$.
In \cite{BunkeRumpfSchick}, the authors introduce the notion of an extension:

\begin{definition}
An extension of a pair $(E, G)$ is a T-duality triple of the form
\begin{equation*}
\bigl( (E, G), (\hat E, \hat G), G_J \bigr).
\end{equation*}

\end{definition}

The main existence result states that there exists an extension of $(E, G)$ if and only if $[G] \in \F^2H^3(E)$.
This was first proved in \cite{BunkeRumpfSchick}*{Theorem 2.23} by constructing a classifying space for T-duals.
Our proof starts with two lemmas, the first being the forward direction of the proof.

\begin{lemma}\label{lem:classforward}
If $(E, G)$ has an extension to a T-duality triple, then $[G] \in \F^2H^3(E)$.
\end{lemma}

\begin{proof}
If $(E,G)$ belongs to a T-duality triple then Theorem \ref{thm:F1page} implies that $[G] \in \F^1H^3(E)$.
The result then follows from \cite{schneiderthesis}*{Lemma 3.7}, which says that being in the first filtration step is equivalent to being in the second filtration steps for pairs fitting into a T-duality diagram.
For convenience, we will write out the argument.

To show that $[G] \in \F^2H^3(E)$ we need that if $f \colon C \to X$ is a map
from a 1-dimensional CW-complex $C$, then $F^*[G] = 0$, where $F \colon f^*E
\to E$ is the canonical map covering $f$.
Let $(\hat E, \hat G)$ be a T-dual of $(E,G)$.
Then there exists an isomorphism $p^*G \cong \hat p^*\hat G$ where $p$ and $\hat p$ are the projections from $E \times_X \hat E$ to $E$ and $\hat E$, respectively.
Since $H^2(C) = 0$, the bundle $f^*\hat E$ is trivial and there exists a section $\hat\sigma \colon C \to f^*\hat E$.
Consider the following diagram:
\[
\begin{tikzcd}
    && f^*E 
        \arrow[ld, "{(\id, \hat\sigma \circ \pi)}", swap] 
        \arrow[rd, "\pi"] 
    & \\
    & f^*E \times_C \hat f^*\hat E
        \arrow[ld, "p_C", swap] \arrow[rd, "\hat p_C"]
    && C 
        \arrow[ld, "\hat\sigma"]\\
    f^*E
    && \hat f^*\hat E &
\end{tikzcd}
\]
The maps $p_C$ and $\hat p_C$ are projection maps and $\pi$ is the bundle projection map.
Let $\hat F \colon f^*\hat E \to \hat E$ be the map defined in the pullback of $\hat E$ along $f$.
We calculate:
\begin{align*}
    F^*G
        &\cong (\id, \hat\sigma \circ \pi)^*p_C^*F^*G \\
        &\cong (\id, \hat\sigma \circ \pi)^*(F,\hat F)^*p^*G \\
        &\cong (\id, \hat\sigma \circ \pi)^* (F,\hat F)^* \hat p^*\hat G \\
        &\cong (\id, \hat\sigma \circ \pi)^* \hat p_C^* \hat F^*\hat G \\
        &\cong \pi^* \hat\sigma^* \hat F^* \hat G \\
        &\cong 0
\end{align*}
Let us explain the steps.
First, we use that $p_C \circ (\id, \hat\sigma \circ \pi) = \id$.
The next three steps use that $(F,\hat F) \colon f^*E \times_C f^*\hat E \to E \times_X \hat E$ commutes with the projection maps and that there is an isomorphism $p^*G \cong \hat p^* \hat G$.
In the second last step, we use that $\hat p_C \circ (\id, \hat\sigma \circ \pi) = \hat\sigma \circ \pi$.
For the final step, we note that $H^3(C) = 0$, so that $\hat\sigma^*\hat F^*\hat G$ is trivial.
This completes the proof.
\end{proof}

We establish some notation for the next lemma.
If $E \to X$ is a principal $T^n$-bundle with Chern classes $c_1, \dotsc, c_n \in H^2(X)$, then 
\[
    E \cong E_1 \times_X E_2 \times_X \dotsm \times_X E_n,
\]
where each $E_i$ is a principal $S^1$-bundle classified by $c_i$.
If $\hat E$ is a second bundle with the decomposition $\hat E \cong \hat E_1 \times_X \hat E_2 \times_X \dotsm \times_X \hat E_n$, then we can write $J$ as 
\[
    J \cong J_1 \times_{X \times I} \times \dotsm \times_{X \times I} J_n,
\]
where each $J_i$ is the fiberwise join of $E_i$ and $\hat E_i$.
This decomposition will allow us to restrict to the $n=1$ case.

Note that if $[G_J] \in \thom(J)$, then the corresponding gerbe belongs to a T-duality triple connecting $(E, G_J|_E)$ and $(\hat E, G_J|_{\hat E})$.
Therefore, Lemma \ref{lem:classforward} implies that $[G_J|_E] \in \F^2H^3(E)$.
This means that the restriction of a Thom class to $E$ lies in $\F^2H^3(E)$, or in other words, restriction induces a map $\thom(J) \to \F^2H^3(E)$.

The following lemma can be considered a preliminary version of the existence theorem.
It says that if we have a pair $(E, G)$ and the dual bundle $\hat E$ satisfying the right relation with $[G]^{2,1}$, then we can find the dual gerbe.

\begin{lemma}\label{restlemma}
Let $(E, G)$ be a pair over a CW-complex $X$ with $[G] \in \F^2 H^3(E)$ and let $\hat E$ be a
$T^n$-bundle with Chern classes $\hat c_1, \dotsc, \hat c_n \in H^2(X)$. Let
$J$ be the fiberwise join of $E$ and $\hat E$.
If $[G]^{2,1} = [\sum_{i=1}^n y_i \otimes \hat c_i ]$, then there exists a Thom class $[G_J]$ that restricts to $[G]$ under the inclusion $E \hookrightarrow J$.
\end{lemma}

\begin{proof}
The proof has two steps: first, we discuss the case where $X$ is
two-dimensional and then we use this to generalise to the arbitrary setup.

\textbf{Step 1:} Assume that $X$ is a two-dimensional CW-complex.
In this case, the exact sequence \eqref{exactsequenceJ} implies that $H^3(J) \cong H^3(T^n * T^n)$, so that there is a unique Thom class $[G_J] \in \thom(J)$.

Using the notation established before the statement of the lemma, on each $E_i$ there is a gerbe $G_i$ such that $[G_i]^{2,1} = y \otimes \hat c_i$.
We then have
\[
    [G] = \sum_{i=1}^n \pr_i^*[G_i].
\]
For each pair $(E_i, G_i)$, there is a unique Thom class $[G_J^{(i)}] \in \thom(J_i)$ such that $[G_J^{(i)}|_E] = [G_i]$.
Being a Thom class, $[G_J^{(i)}]$ maps to $[y \cup \hat y]$ under the isomorphism $H^3(J_i) \cong H^3(S^1 * S^1)$.
This means that $\sum_{i=1}^n \pr_i^*[G_J^{(i)}] \in H^3(J)$ is mapped to $\bigl[\sum_{i=1}^n y_i \cup \hat y \bigr]$ under the isomorphism with $H^3(T^n * T^n)$, that is, it is a Thom class.
By uniqueness of the Thom class we must then have $[G_J] = \sum_{i=1}^n p_i^*[G_J^{(i)}]$.
Therefore,
\[
   [G_J|_E] 
   = \sum_{i=1}^n [G_J^{(i)}|_E]
   = \sum_{i=1}^n [G_i]
   = [G],
\]
which implies the result in the case where $X$ is two-dimensional.

\textbf{Step 2: } Now let $X$ be a CW-complex and $X^{(2)}$ its 2-skeleton.
Let $(E^{(2)}, G^{(2)})$ and $J^{(2)}$ be the restriction of $(E, G)$ and $J$ to $X^{(2)}$.
By the first part of the proof, there exists a unique $[G^{(2)}_J] \in \thom(J^{(2)})$ such that $[G^{(2)}_J|_E] = [G^{(2)}]$.
Any $[G_J] \in \thom(J)$ restricts to an element of $\thom(J^{(2)})$, but $[G_J^{(2)}]$ is the only element in $\thom(J^{(2)})$, so all $[G_J]$ restricts to $[G_J^{(2)}]$.

By standard results in cellular cohomology, the inclusion of the 2-skeleton induces an injection $H^2(X) \hookrightarrow H^2(X^{(2)})$.
Writing $\pi^{(2)} \colon E^{(2)} \to X^{(2)}$, this further induces an injection ${}^{\pi}E^{2,1}_\infty \to {}^{\pi^{(2)}}E^{2,1}_\infty$.
We can see this by considering the following diagram:
\[
\begin{tikzcd}
    0 \rar
    & H^2(T^n)
        \rar["{}^{\pi}d^{0,2}_2"] \dar["="]
    & \ker({}^\pi d^{2,1}_2)
        \rar \dar[hook]
    & {}^{\pi}E^{2,1}_\infty
        \dar \rar
    & 0 \\
    0 \rar
    & H^2(T^n)
        \rar["{}^{\pi^{(2)}}d^{0,2}_2"] 
    & \ker({}^{\pi^{(2)}} d^{2,1}_2)
        \rar 
    & {}^{\pi^{(2)}}E^{2,1}_\infty
        \rar
    & 0 
\end{tikzcd}
\]
The top row comes form the spectral sequence for $E$ and the bottom row from the spectral sequence for $E^{(2)}$.
The center vertical map is injective because it is a restriction of $H^1(T^n) \otimes H^2(X) \to H^1(T^n) \otimes H^2(X^{(2)})$, which is injective.
One can check via a simple diagram chase that the first vertical map being an equality and the center map being injective implies that the right vertical map is injective.

So, we now consider the diagram
\[
\begin{tikzcd}
    \thom(J)
        \rar \dar
    & \F^2 H^3(E)
        \dar \rar
    & {}^{\pi}E^{2,1}_\infty
        \dar[hook]
    \\
    \thom(J^{(2)})
        \rar
    & \F^2 H^3(E^{(2)})
        \rar
    & {}^{\pi^{(2)}}E^{2,1}_\infty.
\end{tikzcd}
\]
The class $[G_J]$ in the top left is sent to $\bigl[ \sum_{i=1}^n y_i \otimes \hat c_i \bigr]$ in the bottom right, as seen by taking the path through $\thom(J^{(2)})$ and $\F^2H^3(E^{(2)})$. 
Injectivity of the right-most vertical map then implies that $[G_J]$ is mapped to $\bigl[ \sum_{i=1}^n y_i \otimes \hat c_i \bigr]$ in the top right.
It is possible, however, that $[G_J]$ does not restrict to $[G] \in \F^2H^3(E)$, but we will show that one can modify it by an element in $H^3(X)$ so that it does.
Consider the following short exact sequence coming from the spectral sequence associated with $E$:
\[
    0 \to {}^\pi E^{3,0}_\infty \to \F^2H^3(E) \to {}^{\pi}E^{2,1}_\infty \to 0.
\]
Both $[G_J|_E]$ and $[G]$ map to the same element in ${}^\pi E^{2,1}_\infty$, and therefore  differ by an element coming from ${}^\pi E^{3,0}_\infty$.
The group ${}^\pi E^{3,0}_\infty$ is a quotient of $H^3(X)$, so we can choose a $[Q] \in H^3(X)$ that represents the aforementioned difference element.
Then $[G_J] + j^*[Q]$ is a Thom class representing a gerbe that restricts to $G$, where $j \colon J \to X$ is the projection onto $X$.
This completes the proof.
\end{proof}

Now we prove the main existence result:

\begin{theorem}\label{thm:existence}
A pair $(E, G)$ over a CW-complex $X$ has a T-dual dual if and only if $[G] \in \F^2 H^3(E)$.
\end{theorem}

\begin{proof}

The forward direction of the proof is Lemma \ref{lem:classforward}.
For the reverse direction, assume that $[G] \in \F^2H^3(E)$.
Since ${}^{\pi}E^{2,1}_\infty$ is a sub-quotient of $H^1(T^n) \otimes H^2(X)$, $[G]^{2,1}$ is of the form $\bigl[\sum_{i=1}^n y_i \otimes \hat c_i \bigr]$ for some $\hat c_1, \dotsc, \hat c_n \in H^2(X)$.
Let $\hat E$ be the principal bundle having Chern classes $\hat c_1, \dotsc, \hat c_n \in H^2(X)$. 
Lemma \ref{restlemma} implies that there is a gerbe $\hat G \to \hat E$ such that $(E, G)$ and $(\hat E, \hat G)$ are T-dual.
This completes the proof.
\end{proof}

\begin{corollary}
In the $n=1$ case, T-duals always exist.
\end{corollary}

\begin{proof}
Since $H^3(S^1) = H^2(S^1) = 0$, we always have $H^3(E) = \F^2H^3(E)$.
\end{proof}

\section{Global condition on the H-flux}
\label{sec:globalcondition}

When $n=1$, two T-dual pairs $(E, G)$ and $(\hat E, \hat G)$ are characterised
by the property
\begin{equation}\label{eq:push_relation}
\pi_!\bigl([G]\bigr) = \hat c_1; \quad \hat \pi_!\bigl([\hat
G] \bigr) = c_1.
\end{equation}
This is proven in \cite{BunkeSchick05}*{Lemma 2.29} using the
universal space for T-duality triples. To be self-contained we give an
independent and self-contained proof below; indeed this is a general statement
about Thom classes for direct sums of two complex line bundles which should be
well known.

The following is the generalisation of \eqref{eq:push_relation} to $n \geq 1$.

\begin{theorem}\label{conditiontheorem}
If $(E, G)$ and $(\hat E, \hat G)$ are T-dual pairs over a CW-complex $X$ then
\begin{equation}\label{con}
[G]^{2,1} = \Bigl[ \sum_{i=1}^n y_i \otimes \hat c_i \Bigr] \in {}^{\pi} E^{2,1}_\infty
	\quad \text{and} \quad
	[\hat G]^{2,1} = \Bigl[ \sum_{i=1}^n \hat y_i \otimes c_i \Bigr] \in {}^{\hat \pi} E^{2,1}_\infty.
\end{equation}
\end{theorem}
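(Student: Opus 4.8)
The plan is to route everything through a single class on the central fibre product. Write $W := E\times_X\hat E$ with projection $\rho\colon W\to X$ (fibre $T^n\times T^n$). Using the homotopy-commutative inclusion diagram \eqref{inclusiondiagram} --- so that $j_E\circ p\simeq i$ and $j_{\hat E}\circ\hat p\simeq i$, where $j_E,j_{\hat E}$ are the end inclusions and $i\colon W\hookrightarrow J$ --- together with the fact that $G_J$ restricts to $G$ and $\hat G$, I obtain
\[
p^*[G] = i^*[G_J] = \hat p^*[\hat G] =: \Theta\in H^3(W).
\]
Before extracting leading parts I must know the classes lie in $\F^2$: fibrewise triviality gives $[G]\in\F^1H^3(E)$ and $[\hat G]\in\F^1H^3(\hat E)$, and since the Thom-class triple corresponds to a diagram of the form \eqref{Tdiagram} (by the equivalence of the two notions of T-duality established above), \cite[Lemma 3.7]{Schneiderthesis} upgrades this to $\F^2$. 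Hence $[G]^{2,1}$ and $[\hat G]^{2,1}$ are defined, and by naturality of the Leray--Serre filtration $\Theta\in\F^2H^3(W)$ with $\Theta^{2,1}=p^*[G]^{2,1}=\hat p^*[\hat G]^{2,1}$.

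The heart of the argument is to compute $\Theta^{2,1}$ in ${}^\rho E^{2,1}_\infty$ from the Thom-class condition. Here the transgressions are $d_2y_i=c_i$ and $d_2\hat y_i=\hat c_i$, so $d_2\bigl(\sum_i y_i\hat y_i\bigr)=\sum_i(c_i\otimes\hat y_i-\hat c_i\otimes y_i)$ in ${}^\rho E^{2,1}_2$. The Thom-class condition says $[G_J]$ restricts on the join fibre to $\sum_i y_i\cup\hat y_i$, which is the image of $\sum_i y_i\otimes\hat y_i$ under the fibrewise Mayer--Vietoris isomorphism $H^2(T^n\times T^n)/(H^2(T^n)\oplus H^2(T^n))\cong H^3(T^n* T^n)$. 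I would feed this into a comparison of the Serre spectral sequences of $J$ and of $W$ through the fibrewise join decomposition $J=\mathcal U\cup\mathcal V$ (with $\mathcal U\simeq E$, $\mathcal V\simeq\hat E$, $\mathcal U\cap\mathcal V\simeq W$): the long exact sequence relating these spectral sequences, equivalently a cellular double-complex chase over $X$, relates the connecting homomorphism of this cover to the differential $d_2$. The outcome I expect is
\[
\Theta^{2,1}=\Bigl[\sum_i c_i\otimes\hat y_i\Bigr]=\Bigl[\sum_i\hat c_i\otimes y_i\Bigr]\in{}^\rho E^{2,1}_\infty,
\]
the two representatives agreeing precisely because their difference is $d_2\bigl(\sum_i y_i\hat y_i\bigr)$; the factor $\hat c_i$ (resp.\ $c_i$) arises from transgressing the $\hat y_i$ (resp.\ $y_i$) direction against the Thom-class input.

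The same comparison, carried out at the $E$-end (respectively the $\hat E$-end) rather than on $W$, then identifies the leading parts directly as $[G]^{2,1}=\bigl[\sum_i y_i\otimes\hat c_i\bigr]\in{}^\pi E^{2,1}_\infty$ and $[\hat G]^{2,1}=\bigl[\sum_i\hat y_i\otimes c_i\bigr]\in{}^{\hat\pi}E^{2,1}_\infty$, which is exactly \eqref{con}. I would compute at the ends directly rather than trying to invert $p^*$, since $p^*\colon{}^\pi E^{2,1}_\infty\to{}^\rho E^{2,1}_\infty$ need not be injective (torsion in the Chern classes can make the $\hat y$-components of a $d_2$-image cancel), so the clean pure-$y$ representative on $W$ does not by itself pin down the class upstairs.

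I expect the main obstacle to be exactly this transfer of the fibrewise Thom-class datum to the leading symbol. The naive morphism of spectral sequences induced by $i\colon W\hookrightarrow J$ only controls the top symbol in ${}^\rho E^{0,3}_\infty$, and this vanishes here: the restriction $H^3(T^n* T^n)\to H^3(T^n\times T^n)$ --- equivalently the end restriction to $H^3(T^n)$ --- is zero, consistent with $[G]$ being fibrewise trivial. The nonzero information therefore lives in a strictly lower filtration step and is a genuine secondary symbol, invisible to naturality and recoverable only by letting the Mayer--Vietoris connecting homomorphism interact with the transgression $d_2$. Making this interaction precise, through the long exact sequence of the Serre spectral sequences of the cover $\{\mathcal U,\mathcal V\}$ or an explicit cellular cochain model on $X$ (which is where the assumption that $X$ has the homotopy type of a CW-complex enters), is the technical core of the proof.
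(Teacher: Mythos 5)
Your setup is sound and you have correctly located the difficulty, but the proposal stops exactly where the proof has to begin. The phrases ``I would feed this into a comparison of the Serre spectral sequences\dots'', ``the outcome I expect is $\Theta^{2,1}=[\sum_i c_i\otimes\hat y_i]$'', and the repetition of the same unperformed comparison ``at the $E$-end'' are declarations of intent, not arguments. The entire content of the theorem is precisely the passage from the fibrewise datum (the image of $[G_J]$ in $H^3(T^n * T^n)$, which is the filtration-degree-$0$ information for $J\to X$) to the secondary symbol $[G]^{2,1}$ sitting in filtration degree $2$ for $E\to X$; as you observe yourself, this jump is invisible to naturality of the spectral sequence, and no mechanism for recovering it is actually exhibited. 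A ``long exact sequence relating the Serre spectral sequences of the cover $\{\mathcal U,\mathcal V\}$'' is not a standard tool one can simply invoke, and making the claimed interaction between the Mayer--Vietoris connecting homomorphism and the transgression $d_2$ precise over an arbitrary base is not obviously easier than the theorem itself. The correct observations you do make (that $p^*$ need not be injective on ${}^\pi E^{2,1}_\infty$, that the two candidate representatives of $\Theta^{2,1}$ differ by $d_2(\sum_i y_i\hat y_i)$) are consistency checks, not a proof.

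The paper sidesteps this computation entirely by a two-step reduction that you may want to adopt. First, since $\iota^*\colon H^2(X)\to H^2(X^{(2)})$ is injective for a CW complex and the leading parts are natural, it suffices to verify \eqref{con} after pulling the whole triple back to the $2$-skeleton. Second, over a $2$-dimensional complex one has $H^3(X)=H^4(X)=0$, so the map $H^3(J)\to H^3(T^n * T^n)$ is an isomorphism and the Thom class condition pins down $[G_J]$ completely; writing $E$ and $\hat E$ as fibre products of circle bundles then splits $G_J$ (up to isomorphism) as a product of pullbacks of Thom classes from the circle-bundle joins $J_i$, and the known $n=1$ statement $\pi_*[G]=\hat c$ gives each summand, with additivity finishing the proof. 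If you wish to salvage your direct attack on a general base, you must actually supply the cellular double-complex argument you allude to; as written, the ``technical core'' you identify is exactly the gap.
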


We prove this in the same way we proved Lemma \ref{restlemma}, but we break
the steps into three lemmas. Note that the argument involves a reduction to the
case $n=1$ which we prove independently as the first lemma.
The second lemma says that the result is true for spaces homotopy equivalent to
a 2-dimensional CW complex and the third says that  the assertion for general
$X$ follows from the one for its 2-skeleton. 

Before proving the lemmas, we show that, in the $n=1$ case, \eqref{con} is equivalent to \eqref{eq:push_relation}.
The push-forward map is part of the Gysin sequence, which is derived from the spectral sequence.
It factors as follows:
\[
\begin{tikzcd}
H^3(E) \arrow[r, "\pi_!"] \arrow[d] & H^2(X) \\
{}^\pi E^{2,1}_{\infty} \arrow[r, hook] &  H^1(T) \otimes H^2(X) \arrow[u, "\sim", swap] 
\end{tikzcd}
\]
We see from this that $[G]$ is mapped to $y \otimes \hat c$ precisely when $\pi_!\bigl([G]\bigr) = \hat c$.
The corresponding statement is true for the dual H-flux.

\begin{lemma}
  Given two $U(1)$-principal bundles $\pi\colon E\to X$, $\hat\pi\colon \hat
  E\to X$ with Chern classes 
  $c,\hat c\in H^2(X)$, let $E_\complexs$ and $\hat E_\complexs$ be
  the associated complex line bundles. Let $S(E_\complexs\oplus \hat
  E_\complexs)$ be the unit sphere bundle of the direct sum of the line
  bundles and assume that $T\in H^3(S(E_\complexs\oplus \hat
  E_\complexs))$ is
  a Thom class. Let $\iota\colon E\to S(E_\complexs\oplus \hat E_\complexs)$ be
  the canonical inclusion and correspondingly $\hat \iota$ the canonical inclusion
  of $\hat E$. Then,
  \begin{equation*}
    \pi_!(\iota^*T) = \hat c
    \quad \text{and} \quad
    \hat\pi_!(\hat \iota^*T)= c.
  \end{equation*}
\end{lemma}

\begin{proof}
By symmetry, it is sufficient to prove that $\pi_!(\iota^*T) = \hat c$.
  We have the canonical inclusion of disk bundles $\iota\colon
   D(E_\complexs)\to D(E_\complexs\oplus \hat E_\complexs)$ into the first
   summand, restricting to the inclusion 
  $\iota\colon E\to S(E_\complexs\oplus \hat E_\complexs)$ of boundaries. By
  naturality of the pair sequence we have
  \begin{equation*}
    \iota^* \partial(T) = \partial(\iota^*T) \in
    H^4(D(E_\complexs),E) \cong H^4_{c}(E_\complexs),
  \end{equation*}
  where we use the canonical isomorphism to the compactly supported
  cohomology of the total space of the complex line bundle. 

  Now observe that, under the corresponding canonical isomorphism,
  \begin{equation*}
    \partial T \in H^4\bigl(D(E_\complexs\oplus \hat
    E_\complexs),S(E_\complexs\oplus \hat E_\complexs)\bigr) \cong
    H^4_c(E_\complexs\oplus \hat E_\complexs)
  \end{equation*}
  is mapped to the Thom class $Th$, which for the direct sum of two
  bundles is the cup product
  \begin{equation*}
Th= \pr_E^*Th_E\cup \pr_{\hat E}^*Th_{\hat E}
\end{equation*}
of the Thom classes in
  compactly supported cohomology $H_c^2(E_\complexs)$ and
  $H_c^2(\hat E_\complexs)$, respectively.
  Here, $\pr_E$ and $\pr_{\hat E}$ denote the projection maps.

Observe also that the integration maps for the circle bundle $E$ and the
associated disk or line bundle are compatible, i.e.~that the following diagram commutes:
\[
\begin{tikzcd}[column sep = 1em]
    H^*(E) \ar[rr,"\partial"] \ar[rd, "\pi_!", swap]
    && H^{*+1}_c(E_{\C}) \ar[ld, "(\pi_{E_\C})_!"] \\
    & H^{*-1}(X) &
\end{tikzcd}
\]
  Also note that we have a commutative diagram of embeddings
\[
\begin{tikzcd}
    E 
        \rar["\iota"] \dar["\pi"]
    & E_{\C} \oplus \hat E_{\C}
        \dar["{\pr_{\hat E}}"] 
    \\
    X 
        \rar["z"]
    & \hat E_{\C}
\end{tikzcd}
\]
  where $z$ is the zero section.
  Now we compute, using $\iota$ for the different inclusions associated with
  the inclusion $\iota\colon E_\complexs\to E_\complexs\oplus \hat
  E_\complexs$, $v\mapsto (v,0)$:
    \begin{multline*}
      \pi_!(\iota^*T) 
      = (\pi_{E_\complexs})_!(\partial(\iota^*T))
      = (\pi_{E_\complexs})_!(\iota^*\partial(T)) \\
      = (\pi_{E_\complexs})_!(\iota^*Th) 
      = (\pi_{E_\complexs})_!(\iota^*(\pr_E^*Th_E\cup \pr_{\hat E}^*Th_{\hat E}))  \\
      = z^*((\pr_{\hat E})_!(\pr_E^*Th_E\cup \pr_{\hat E}^*Th_{\hat E}))
      = z^*Th_{\hat E} =\hat c.
    \end{multline*}
Here, we have used that the push forward of the Thom class $\pr_E^*Th_E$ of the
  complex line bundle $\pr_{\hat E}\colon E_\complexs\oplus \hat
  E_\complexs\to \hat E_\complexs$ is $1$, the pull-push
  formula ``$p_!(a\cup p^*b)= (p_!a)\cup b$'' and the fact that the Chern class is the restriction to the zero
  section of the Thom class.
\end{proof}
Note that this implies \eqref{eq:push_relation} as a special case by the Thom
class definition of T-duality triples.

\begin{lemma}
Let $X$ have the homotopy type of a 2-dimensional CW-complex.
If $(E, G)$ and $(\hat E, \hat G)$ are T-dual pairs over $X$ then condition \eqref{con} holds.
\end{lemma}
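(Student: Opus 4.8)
The plan is to fix the T-duality triple $\bigl((E,G),(\hat E,\hat G),G_J\bigr)$ and to determine the leading parts $[G]^{2,1}$ and $[\hat G]^{2,1}$ directly. By the symmetry $E\leftrightarrow\hat E$, $p\leftrightarrow\hat p$, $y_i\leftrightarrow\hat y_i$, $c_i\leftrightarrow\hat c_i$ (under which the fiber Thom class $\sum_i y_i\cup\hat y_i$ is preserved up to sign), it suffices to establish the identity for $[G]^{2,1}$. Since $X$ has the homotopy type of a $2$-dimensional complex, $H^k(X)=0$ for $k\ge 3$, and I would use this repeatedly: in the Serre spectral sequences of $\pi$, of $\hat\pi$, of $E\times_X\hat E\to X$ and of $J\to X$, the differential $d_2^{2,1}$ vanishes (its target is $H^4(X)=0$) and no higher differential can reach or leave the $(2,1)$-spot, so $E_\infty^{2,1}=E_2^{2,1}/\operatorname{im} d_2^{0,2}$, where $\operatorname{im} d_2^{0,2}$ is computed from $d_2^{0,1}(y_i)=c_i$, $d_2^{0,1}(\hat y_i)=\hat c_i$ and the Leibniz rule. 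For the same reason the edge map $H^3(J)\to H^3(T^n*T^n)$ is an isomorphism, so the Thom class condition determines $[G_J]$ completely.

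Next I would transport the data to the correspondence space. We have $[G]=\iota^*[G_J]$ for the end inclusion $\iota\colon E\hookrightarrow J$, and the homotopy-commutativity of \eqref{inclusiondiagram} gives
\[
w:=i^*[G_J]=p^*[G]=\hat p^*[\hat G]\in H^3(E\times_X\hat E).
\]
Both fiber inclusions $T^n\hookrightarrow T^n*T^n$ (an end) and $T^n\times T^n\hookrightarrow T^n*T^n$ (the central slice) are null-homotopic, so $\iota^*$ and $i^*$ annihilate the fiber class in $H^3(T^n*T^n)$; consequently $[G]$, $[\hat G]$ and $w$ all lie in the second filtration step with leading terms in bidegree $(2,1)$, consistent with the hypothesis $[G]\in\F^2H^3(E)$.

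The heart of the argument is to read off $w^{2,1}$ from the Thom class. For this I would compare the Mayer-Vietoris sequence of $J=\cyl(p)\cup_{E\times_X\hat E}\cyl(\hat p)$ with the Mayer-Vietoris sequence of the fiber join, using naturality under restriction to a fiber together with the suspension identification already exploited in the proof of the equivalence theorem. The fiber Thom class is the image of $\sum_i y_i\cup\hat y_i\in H^2(T^n\times T^n)$ under the connecting map, and this class does not survive to $H^2(E\times_X\hat E)$: its transgression is
\[
d_2^{0,2}\Bigl(\sum_i y_i\cup\hat y_i\Bigr)=\sum_i c_i\otimes\hat y_i-\sum_i\hat c_i\otimes y_i .
\]
Tracing the connecting homomorphism through this transgression identifies the leading term as $w^{2,1}=\bigl[\sum_i\hat c_i\otimes y_i\bigr]=\bigl[\sum_i c_i\otimes\hat y_i\bigr]$ in the spectral sequence of $E\times_X\hat E$. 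Since on the $(2,1)$-line $p^*$ and $\hat p^*$ are the inclusions of the two summands $H^2(X)\otimes H^1(T^n)$ into $H^2(X)\otimes H^1(T^n\times T^n)$, this yields $[G]^{2,1}=\bigl[\sum_i\hat c_i\otimes y_i\bigr]$ and $[\hat G]^{2,1}=\bigl[\sum_i c_i\otimes\hat y_i\bigr]$, which is condition \eqref{con}. As a sanity check, for $n=1$ this reproduces $\pi_*[G]=\hat c$ via the Gysin factorization recorded above.

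The main obstacle is exactly the null-homotopy of the fiber inclusion: because the comparison of Serre spectral sequences vanishes on the $(0,3)$-term where the Thom class lives, the information determining $[G]^{2,1}$ sits one filtration step lower and is invisible to the formal $E_2$-level map. Extracting it forces one to work with the explicit Mayer-Vietoris connecting homomorphism and the suspension isomorphism rather than with a formal spectral-sequence comparison. This is also the step where the full Thom class condition is indispensable: the isomorphism $u$ alone gives only $p^*[G]=\hat p^*[\hat G]$, hence only the symmetric relation $\sum_i\hat c_i\otimes y_i=\sum_i c_i\otimes\hat y_i$, and cannot by itself pin down the individual leading terms.
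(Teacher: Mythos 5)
There is a genuine gap at the final step, where you pass from $w^{2,1}$ back to $[G]^{2,1}$. On the $E_2$-page the map $p^*\colon H^2(X)\otimes H^1(T^n)\to H^2(X)\otimes H^1(T^n\times T^n)$ is indeed a split injection, but the map it induces on $E_\infty^{2,1}$ is not injective: the target is divided by the image of $d_2^{0,2}$ on all of $H^2(T^n\times T^n)$, which contains the mixed transgressions $d_2^{0,2}(y_i\cup\hat y_j)=c_i\otimes\hat y_j-\hat c_j\otimes y_i$, whereas the source is divided only by the transgressions of $\Lambda^2H^1(T^n)$. Take $E$ trivial, so all $c_i=0$: then every $\hat c_j\otimes y_i$ dies in $E_\infty^{2,1}$ of the correspondence space, so the very class $\bigl[\sum_i y_i\otimes\hat c_i\bigr]$ you are trying to detect lies in the kernel of the comparison map; your computation then only shows $w^{2,1}=0$, which is equally consistent with $[G]^{2,1}=0$. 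This is exactly the information loss you yourself identify in your closing paragraph for the isomorphism $u$: by the homotopy commutativity of \eqref{inclusiondiagram}, $i^*$ factors through the end restriction followed by $p^*$, so $w=i^*[G_J]$ carries no more of the Thom class than $p^*[G]$ does, and the relevant part is precisely what $p^*$ kills. To exploit the Thom class you must work with the restriction $H^3(J)\to H^3(E)$ to an end of the join, not with $i^*$ to the central slice. A secondary issue: null-homotopy of the fiber inclusions only places $[G]$ in $\F^1H^3(E)$; since $E_\infty^{1,2}=H^1(X)\otimes H^2(T^n)$ need not vanish even over a $2$-dimensional base, membership in $\F^2$ requires a separate argument (the paper cites Schneider's thesis for this).

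The paper's proof avoids the problem by never leaving $J$: since $H^3(X)=H^4(X)=0$ forces $H^3(J)\cong H^3(T^n*T^n)$, the Thom class condition determines $[G_J]$ completely, and after writing $E$ and $\hat E$ as fiber products of circle bundles one splits $G_J$ as a tensor product of pullbacks of gerbes $G_i$ on the circle-bundle joins $J_i$. Each $G_i$ induces a circle T-duality, for which $\pi_{i*}[G_i|_{E_i}]=\hat c_i$ is already known, and the commutative diagram relating $H^3(J_i)\to H^3(E_i)\to{}^{\pi_i}E_\infty^{2,1}$ to the corresponding maps for $J$ and $E$, together with additivity, finishes the argument. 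If you prefer a direct computation over a reduction to $n=1$, the composite to analyse is $H^3(J)\to H^3(E)\to{}^{\pi}E_\infty^{2,1}$, not the restriction to $E\times_X\hat E$.
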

\begin{proof}
We only prove the condition on $[G]$; symmetry gives the result for $[\hat G]$ as well.
Start by writing $E$ and $\hat E$ as a fiber product of principal circle bundles,
\[
E \cong E_1 \times_X \dotsm \times_X E_n,
\quad
\hat E \cong \hat E_1 \times_X \dotsm \times_X \hat E_n.
\]
Then $J \cong J_1 \times_{X \times I} \dotsm \times_{X \times I} J_n$, where each $J_i$ is the fiberwise join of $E_i$ and $\hat E_i$. 
Let $p_i \colon J \to J_i$ be the projection onto the $i$th factor. Consider the exact sequence
\[
0 \to H^3(X) \to H^3(J) \to H^3(T^n * T^n) \to H^4(X) \to 0.
\]
As $X$ is two-dimensional, $H^3(X) = H^4(X) = 0$, so the map $H^3(J) \to H^3(T^n * T^n)$ is an isomorphism. The class $[G_J]$ corresponds to $\bigl[\sum_{i=1}^n y_i \cup \hat y_i \bigr]$ under this isomorphism. For each $i$ we have
\begin{align*}
H^3(J_i) \cong H^3(S^1* S^1) &\xrightarrow{\,\, p^* \,\,} H^3(T^n * T^n) \cong H^3(J) \\
[y \cup \hat y] &\mapsto [y_i \cup \hat y_i].
\end{align*}
Therefore, since $[G_J]$ is a sum of such classes, there exists gerbes $G_1, \dotsc, G_n$ over $J_1, \dotsc, J_n$, respectively, such that
\[
G_J \cong p_1^*G_1 \otimes \dotsm \otimes p_n^*G_n.
\]
Moreover, each $G_i$ induces a T-duality relation between the pairs $(E_i, G_i|_{E_i})$ and $(\hat E_i, G_i|_{\hat E_i})$. The following commutes:
\[
\begin{tikzcd}
H^3(J_i) \arrow[r] \arrow[d] & H^3(E_i) \arrow[d] \arrow[r] & {}^{\pi_i}E^{2,1}_\infty \arrow[d] \\
H^3(J) \arrow[r]  &  H^3(E) \arrow[r] & {}^\pi E^{2,1}_\infty
\end{tikzcd}
\]
Along the top row, $[y \cup \hat y]$ is mapped to $[y \otimes \hat c_i]$ since we know the result to be true for circle T-duality. Then commutativity of the diagram shows that, in the bottom row, $[y_i \cup \hat y_i]$ is mapped to $[y_i \otimes \hat c_i]$. These maps are all group homomorphisms, so additivity implies the result.
\end{proof}

\begin{lemma}
Let $(E, G)$ and $(\hat E, \hat G)$ be T-dual pairs over a CW complex $X$. If \eqref{con} holds over the 2-skeleton $X^{(2)}$ of $X$, then \eqref{con} holds for $X$.
\end{lemma}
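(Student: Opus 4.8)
The plan is to restrict the whole T-duality triple to the $2$-skeleton and then transport the conclusion back using naturality of the Leray--Serre spectral sequence. Write $j \colon X^{(2)} \hookrightarrow X$ for the inclusion and $\pi' \colon j^*E \to X^{(2)}$ for the restricted bundle (primes will denote objects attached to $\pi'$). Since the Thom class condition is fibrewise, the restriction of $G_J$ to $J|_{X^{(2)}}$ is again a Thom class, so $(j^*E, j^*G)$ and $(j^*\hat E, j^*\hat G)$ are T-dual over $X^{(2)}$. Naturality of the spectral sequence along $j$ gives a homomorphism $\rho \colon {}^\pi E^{2,1}_\infty \to {}^{\pi'} E^{2,1}_\infty$ carrying $[G]^{2,1}$ to $[j^*G]^{2,1}$ and $\bigl[\sum_i y_i \otimes \hat c_i\bigr]$ to $\bigl[\sum_i y_i \otimes j^*\hat c_i\bigr]$, where $j^*\hat c_i$ is the Chern class of the restricted dual bundle. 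By assumption these two images coincide in ${}^{\pi'}E^{2,1}_\infty$, so everything reduces to showing that $\rho$ is injective; the statement for $[\hat G]^{2,1}$ then follows by the symmetric argument.

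To prove injectivity I would first pin down the relevant differentials. For bidegree reasons every $d_r$ into or out of position $(2,1)$ with $r \geq 3$ vanishes, so ${}^\pi E^{2,1}_\infty$ is the subquotient $\ker(d_2^{2,1})/\operatorname{im}(d_2^{0,2})$ of $E_2^{2,1} \cong H^1(T^n) \otimes H^2(X)$, with $d_2^{2,1}\colon E_2^{2,1} \to H^4(X)$ and $d_2^{0,2}\colon H^2(T^n) \to E_2^{2,1}$. Over the $2$-skeleton $H^4(X^{(2)}) = 0$, so the outgoing differential vanishes and $\ker(d_2'^{2,1})$ is all of $E_2'^{2,1} \cong H^1(T^n) \otimes H^2(X^{(2)})$. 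The induced map $E_2^{2,1} \to E_2'^{2,1}$ is $\id \otimes j^*$, which is injective because restriction to the $2$-skeleton of a CW complex is injective on $H^2$. Moreover, since $j^*$ is an isomorphism on $H^0$, the image of the restricted $d_2'^{0,2}$ is exactly $(\id \otimes j^*)(\operatorname{im} d_2^{0,2})$. A short diagram chase now yields injectivity of $\rho$: if $z \in \ker(d_2^{2,1})$ has $(\id \otimes j^*)(z) \in \operatorname{im}(d_2'^{0,2})$, then $(\id \otimes j^*)(z) = (\id \otimes j^*)(b)$ for some $b \in \operatorname{im}(d_2^{0,2})$, and injectivity of $\id \otimes j^*$ forces $z = b \in \operatorname{im}(d_2^{0,2})$, so $z$ is zero in ${}^\pi E^{2,1}_\infty$.

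I expect the main obstacle to be the bookkeeping of the subquotient rather than any deep input: injectivity of $\id \otimes j^*$ on the $E_2$-page does not automatically descend to the subquotient $E^{2,1}_\infty$. The two structural facts that make it descend are the vanishing $H^4(X^{(2)}) = 0$, which forces the kernel on the skeleton side to be the whole of $E_2'^{2,1}$, and the identity $\operatorname{im}(d_2'^{0,2}) = (\id\otimes j^*)(\operatorname{im} d_2^{0,2})$ coming from $H^0(X)\cong H^0(X^{(2)})$; once both are in hand the diagram chase above is immediate. With $\rho$ shown injective, the equality $[G]^{2,1} = \bigl[\sum_i y_i \otimes \hat c_i\bigr]$ over $X$ follows at once, and symmetry gives the condition on $[\hat G]^{2,1}$, completing the proof.
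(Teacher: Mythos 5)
Your proof is correct and follows essentially the same route as the paper: restrict the triple to the $2$-skeleton, use naturality of the Leray--Serre spectral sequence, and conclude from injectivity of the induced map ${}^{\pi}E^{2,1}_{\infty} \to {}^{\pi'}E^{2,1}_{\infty}$. The only difference is that the paper simply asserts this injectivity, whereas you supply the (correct) verification via the identification $E^{2,1}_\infty = \ker(d_2^{2,1})/\operatorname{im}(d_2^{0,2})$, injectivity of $\id\otimes j^*$, and the matching of the images of $d_2^{0,2}$.
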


\begin{proof}
By standard results in cellular cohomology, the inclusion of the $2$-skeleton $\iota \colon X^{(2)} \hookrightarrow X$ induces an injection $\iota^* \colon H^2(X) \hookrightarrow H^2(X^{(2)})$.
The pairs $(E, G)$ and $(\hat E, \hat G)$ pull back to T-dual pairs $(E^{(2)}, G^{(2)})$ and $(\hat E^{(2)}, \hat G^{(2)})$ over $X^{(2)}$.
Writing $\pi^{(2)} \colon E^{(2)} \to X^{(2)}$, the map $\iota^*$ further induces an injection ${}^\pi E^{2,1}_{\infty} \hookrightarrow {}^{\pi^{(2)}}E^{2,1}_{\infty}$. 
We explained why this is true in the proof of Lemma \ref{restlemma}.
This fits into the diagram
\[
\begin{tikzcd}
\F^2 H^3(E) \arrow[r] \arrow[d] & \F^2 H^3(E^{(2)}) \arrow[d] \\
{}^\pi E^{2,1}_{\infty} \arrow[r, hook] & {}^{\pi^{(2)}}E^{2,1}_{\infty},
\end{tikzcd}
\]
which commutes by the naturality of the spectral sequence.
Consider $[G]$ in the top left, which is mapped to $[G^{(2)}]$ along the upper map, then mapped further to $[\sum_{i=1}^n y_i \otimes \iota^*\hat c_i ]$.
This latter statement is by assumption on $X^{(2)}$.
Injectivity of the lower horizontal map then implies that $[G]$ must be mapped to $[\sum_{i=1}^n y_i \otimes \hat c_i ]$ along the left map, as required. 
\end{proof}

\section{Uniqueness of T-duals}\label{sec:uniqueness}

To discuss uniqueness, we recall three notions of isomorphism from \cite{BunkeRumpfSchick}; isomorphisms of pairs $(E, G)$, isomorphisms of T-duality triples $\bigl((E, G), (\hat E, \hat G), G_J \bigr)$, and isomorphisms of extensions of a given pair to a T-duality triple.

\begin{definition}
An isomorphism between two pairs $(E, G)$ and $(E', G')$ is a principal
$T^n$-bundle isomorphism $f \colon E \to E'$ together with an isomorphism
$\psi\colon f^*G' \xrightarrow{\cong} G$.
\end{definition}

Although we have somewhat ignored it in the notation, we stress that all of the pairs we consider are over a fixed base space $X$ and that the bundle isomorphisms cover the identity map of $X$.
Something else to note about this definition is that, by considering a
non-trivial automorphism $f \colon E \to E$, it is possible for two pairs $(E,
G)$ and $(E, G')$ to be isomorphic even if $G$ and $G'$ are not isomorphic as
gerbes over $E$.

\begin{definition}
An isomorphism between T-duality triples $\bigl((E_1, G_1), (\hat E_1, \hat
G_1), G_J \bigr)$ and $\bigl( (E_2, G_2), (\hat E_2, \hat G_2), G_J' \bigr)$
is a pair $(f, \hat f)$ consisting of isomorphisms $f \colon E_1 \to E_2$ and
$\hat f \colon \hat E_1 \to \hat E_2$ together with an isomorphism $\psi\colon
F^*G_J \xrightarrow{\cong} G_J$, where $F$ is the canonical map induced by $f$ and $\hat f$ on the fiberwise joins.
\end{definition}

\begin{definition}
An isomorphism of two extensions of $(E, G)$ is an isomorphism of T-duality triples such that the automorphism $f \colon E \to E$ is the identity.
\end{definition}

The first uniqueness theorem answers the following question: If a pair $(E, G)$ has a T-dual, how unique is the T-dual bundle $\hat E$?
This was first answered in \cite{BunkeRumpfSchick}*{Theorem 2.24 (2)}.

\begin{theorem}\label{thm:uniqueness1}
If $(E, G)$ and $(\hat E, \hat G)$ are T-dual, then for every antisymmetric matrix $B \in \operatorname{Mat}(n,n,\Z)$ there exists a pair $(\hat E', \hat G')$ with $\hat c_i' = \hat c_i + \sum_{j=1}^n B_{ij}c_j$ that is also T-dual to $(E, G)$.
Moreover, every T-dual of $(E, G)$ satisfies this for some $B$.
\end{theorem}

\begin{proof}

Suppose that 
\begin{equation}\label{equalE2}
    \Big[ \sum_{i=1}^n y_i \otimes \hat c_i \Bigr]
    = \Bigl[ \sum_{i=1}^n y_i \otimes \hat c_i' \Bigr]
    \in {}^{\pi}E^{1,2}_\infty.
\end{equation}
This is true if and only if $\sum_{i=1}^n y_i \otimes \hat c_i - \sum_{i=1}^n y_i \otimes \hat c_i'$ is in the image of $d_2^{0,2}$ on the second page of the spectral sequence associated with $E$.
In other words, for some $\sum_{i,j} A_{ij} y_i \cup y_j \in H^2(T^n)$, we have
\begin{align*}
    \sum_{i=1}^n y_i \otimes (\hat c_i - \hat c_i')
    &= d_2^{0,2} \Bigl( \sum_{i,j} A_{ij} y_i \cup y_j \Bigr) \\
    &= \sum_{i,j} \bigl( A_{ij}y_i \otimes c_i - A_{ij}y_j \otimes c_i \bigr) \\
    &= \sum_{i=1}^n \sum_{j=1}^n y_i \otimes  (A_{ij} - A_{ji})c_j.
\end{align*}
We conclude that \eqref{equalE2} holds if and only if $\hat c_i - \hat c_i' = \sum_{j=1}^n B_{ij} c_j$ where $B_{ij} = A_{ij} - A_{ji}$ are the entries of an antisymmetric matrix.

Now, given $\hat E'$ with Chern classes $\hat c_i' = \hat c_i + \sum_{j=1}^n B_{ij}c_j$, we have 
\[
    [G]^{2,1} 
    =\Big[ \sum_{i=1}^n y_i \otimes \hat c_i \Bigr]
    = \Bigl[ \sum_{i=1}^n y_i \otimes \hat c_i' \Bigr],
\]
and so Lemma \ref{restlemma} implies that there exists $\hat G'$ making $(\hat E', \hat G')$ T-dual to $(E, G)$.

For the second statement, if $(\hat E, \hat G)$ and $(\hat E', \hat G')$ are both T-dual to $(E, G)$, then Theorem \ref{conditiontheorem} implies that
\[
    \Big[ \sum_{i=1}^n y_i \otimes \hat c_i \Bigr]
    =[G]^{2,1} 
    = \Bigl[ \sum_{i=1}^n y_i \otimes \hat c_i' \Bigr],
\]
and therefore $\hat c_i' = \hat c_i + \sum_{j=1}^n B_{ij}c_j$ for some antisymmetric matrix $B$.
\end{proof}

The following theorem, which is our version of \cite{BunkeRumpfSchick}*{Proposition 7.31}, is required to prove the next uniqueness result.
We provide a new proof of the result in Section \ref{sec:proof}.

\begin{theorem}\label{thm:pullbackThomclass}
Let $E$ and $\hat E$ be principal $T^n$-bundles and let $j\colon J \to X$ be their fiberwise join.
Given bundle automorphisms $\psi \colon E \to E$ and $\hat \psi \colon \hat E
\to \hat E$ let $\Psi \colon J \to J$ be the  canonical homeomorphism that restricts to $\psi$ and $\hat \psi$.

Then for any Thom class $T \in \thom(J)$, we have
\[
    \Psi^*T = T
        + j^*\Bigl(\sum_{i=1}^n c_i \cup \hat \psi_i 
        + \sum_{i=1}^n \hat c_i \cup \psi_i \Bigr),
\]
where $c_i$ and $\hat c_i$ are the Chern classes of $E$ and $\hat E$ and $\psi_i, \hat \psi_i \in H^1(X)$ are the classes associated with the automorphisms $\psi$ and $\hat \psi$.
\end{theorem}

\begin{corollary}\label{cor:pullbackThomclass}
Let $(E, G)$ and $(\hat E, \hat G)$ be T-dual and let $Q \to X$ be a gerbe.
If $[Q] = \sum_{i=1}^n c_i \cup \hat\psi_i$ for some $\hat\psi_1, \dotsc, \hat\psi_n \in H^1(X)$, then $(\hat E, \hat G)$ is isomorphic to $(\hat E, \hat G \otimes \hat\pi^*Q)$.
\end{corollary}

\begin{proof}
Let $\hat \psi \colon \hat E \to \hat E$ be the automorphism corresponding to the classes $\hat\psi_1, \dotsc, \hat\psi_n \in H^1(X)$.
Consider $\Psi \colon J \to J$ defined using $\psi = \id$ and the chosen $\hat \psi$.
Theorem \ref{thm:pullbackThomclass} implies that 
\[
    \Psi^*[G_J] = [G_J] + j^*\Bigl( \sum_{i=1} c_i \cup \hat\psi_i \Bigr).
\]
This implies that 
\[
    \hat\psi^*[\hat G] 
    = [\hat G] + \hat\pi^*\Bigl( \sum_{i=1}^n c_i \cup \hat\psi_i \Bigr) 
    = [\hat G] + \hat\pi^*[Q].
\]
Therefore $\hat\psi^*\hat G \cong \hat G \otimes \hat\pi^*Q$, which means that $(\hat E, \hat G)$ and $(\hat E, \hat G \otimes \hat\pi^*Q )$ are isomorphic.
\end{proof}

Theorem \ref{thm:existencetwobundles} implies that there is a transitive action of $H^3(X)$ on the isomorphism classes of T-duality triples connecting $E$ and $\hat E$.
Explicitly, if $\bigl( (E, G), (\hat E, \hat G), G_J \bigr)$ is a T-duality triple and $[Q] \in H^3(X)$, then 
\[
    \bigl((E, G \otimes \pi^*Q), (\hat E, \hat G \otimes \hat\pi^*Q), G_J \otimes j^*Q \bigr)
\]
is another T-duality triple containing $E$ and $\hat E$.
If $[Q] \in \ker(\pi^*)$, then this triple is also an extension of $(E, G)$.
If, in addition, $[Q] = \sum_{i=1}^n c_i \cup \hat \alpha_i$ for $\hat\alpha_i \in H^1(X)$, then Corollary \ref{cor:pullbackThomclass} implies that the two triples are isomorphic extensions of $(E, G)$.

With this in mind, consider the map
\[
    C \colon H^1(T^n) \otimes H^1(X) \to H^3(X)
    \quad \sum_{i=1} y_i \otimes \alpha_i \mapsto \sum_{i=1}^n c_i \cup \alpha_i.
\]
One may recognise this as the map $d^{1,1}_2$ in the spectral sequence of $E$.
It takes values in $\ker(\pi^*)$ because $\pi^*(c_i) = 0$ for each $c_i$.
By the previous discussion, we have an action of $\ker(\pi^*) / \im(C)$ on the isomorphism classes of extensions of $(E, G)$ whose dual bundle is $\hat E$.

The following theorem is \cite{BunkeRumpfSchick}*{Theorem 2.23(3)}.
We give a new and more direct proof of the key ingredients needed to establish
it.

\begin{theorem}\label{thm:uniqueness2}
Let $\bigl( (E, G), (\hat E, \hat G), G_J \bigr)$ be a T-duality triple.
The set of isomorphism classes of extensions of $(E, G)$ that have a dual bundle isomorphic to $\hat E$ is the orbit of $\bigl( (E, G), (\hat E, \hat G), G_J \bigr)$ under an effective action of $\ker(\pi^*) / \im(C)$.
\end{theorem}

\begin{proof}
Let $(\hat E, \hat G)$ and $(\hat E, \hat G')$ be both T-dual to $(E, G)$. 
By Theorem \ref{thm:existencetwobundles}, there exists $[Q] \in H^3(X)$ such that $\pi^*[Q] = 0$ and $[\hat G'] = [\hat G] + \hat\pi^*[Q]$.
Therefore $\ker(\pi^*)$, and hence $\ker(\pi^*) / \im(C)$, acts transitively on the set of pairs $(\hat E, \hat G')$ T-dual to $(E, G)$.

Now, suppose that $[Q] \in \ker(\pi^*)$ such that the extensions $\bigl( (E,
G), (\hat E, G), G_J\bigr)$ and $\bigl( (E, G), (\hat E, G \otimes \hat\pi^*
Q), G_J \otimes j^*Q\bigr)$ are isomorphic.
By definition, this means that there is an automorphism $\hat\psi \colon \hat E \to \hat E$ such that $\Psi^*[G_J] = [G_J] + j^*[Q]$, where $\Psi$ is constructed with $\hat \psi$ and the identity on $E$.
By Theorem \ref{thm:pullbackThomclass}, we then have
\[
    [G_J] + j^*[Q] 
        = \Psi^*[G_J] 
        = [G_J] + j^*\Bigl(\sum_{i=1} c_i \cup \hat \psi_i \Bigr).
\]
Since $j^*$ is injective, this implies that $[Q] = \sum_{i=1}^n c_i \cup \hat \psi_i \in \im(C)$.
Therefore, $\ker(\pi^*) / \im(C)$ acts freely on the set of extensions of $(E, G)$ with dual bundle $\hat E$. 
We have shown that $\ker(\pi^*) / \im(C)$ acts freely and transitively on the set of extensions of $(E, G)$ with a fixed dual bundle, so the proof is complete. 
\end{proof}

The following is now a corollary of Theorem \ref{thm:uniqueness1} and Theorem \ref{thm:uniqueness2}.

\begin{corollary}
In the $n=1$ case, T-duals are unique.
\end{corollary}

\begin{proof}
Let $(E, G)$ and $(\hat E, \hat G)$ be T-dual.
There are no non-zero antisymmetric matrices in $\operatorname{Mat}(1,1,\Z)$,
so Theorem \ref{thm:uniqueness1} implies that any other T-dual has $\hat E$ as
its underlying $S^1$-principal bundle.

In the $n=1$ case, we have the Gysin sequence
\[
    \dotsm 
    \to H^1(X)
    \xrightarrow{\, \cdot c \, } H^3(X)
    \xrightarrow{\, \pi^* \,} H^3(E)
    \to \dotsm,
\]
and therefore $\ker(\pi^*)$ consists of elements of the form $c \cup \alpha$ for $\alpha \in H^1(X)$.
This is precisely the image of $C$, so $\ker(\pi^*)/\im(C)$ is trivial. Therefore, Theorem \ref{thm:uniqueness2} now implies that $(\hat E, \hat G)$ is the unique T-dual of $(E, G)$, up to isomorphism.
\end{proof}

We have now proved the classification results of \cite{BunkeRumpfSchick}. 

\section{Proof of Theorem~\ref{thm:pullbackThomclass}}
\label{sec:proof}

Here we will give the proof of Theorem \ref{thm:pullbackThomclass}.
The proof is a series of reductions that will allow us to derive the result from the $X = S^2$ case, for which a geometric calculation is given.
Let us start by again stating the theorem.

\begin{theorem*}[Theorem \ref{thm:pullbackThomclass}]
Let $E$ and $\hat E$ be principal $T^n$-bundles and let $j\colon J \to X$ be
their fiberwise join.
Given bundle automorphisms $\psi \colon E \to E$ and $\hat \psi \colon \hat E
\to \hat E$ let $\Psi \colon J \to J$ be the  canonical homeomorphism that restricts to $\psi$ and $\hat \psi$.
For any Thom class $[G_J] \in \thom(J)$, we have
\begin{equation}\label{eqn:pullback}
    \Psi^*T = T
        + j^*\Bigl(\sum_{i=1}^n c_i \cup \hat \psi_i 
        + \sum_{i=1}^n \hat c_i \cup \psi_i \Bigr),
\end{equation}
where $c_i$ and $\hat c_i$ are the Chern classes of $E$ and $\hat E$ and $\psi_i, \hat \psi_i \in H^1(X)$ are the classes associated with the automorphisms $\psi$ and $\hat \psi$.
\end{theorem*}

The proof, written as follows, is intended to be just an overview for the reader to understand the main idea of the proof.
The main content of the proof is in the lemmas that come afterwards.

\begin{proof}[Proof of Theorem \ref{thm:pullbackThomclass}]
By Lemma \ref{lem:XtimesS1}, the theorem can be proved by determining the induced map $\alpha^* \colon H^3(J \times S^1) \to H^3(J \times S^1)$, where 
\[
    \alpha \colon J \times S^1 \to J \times S^1,
    \quad
    (p, z) \mapsto (p\cdot z, z),
\]
is defined using one of the $2n$ circle actions on $J$.

The goal then is to reduce to a calculation over $X = S^2$.
This is done in Lemma \ref{lem:reductionS2}.
There are several steps to the reduction.
First, we can reduce to 3-dimensional CW-complexes, and then to the $n=1$ case.
From here, we can reduce to a calculation over the 3-skeleton of $BS^1 \times BS^1$, which is $S^2 \vee S^2$.
$BS^1 \times BS^1$ is used because it is the universal space for pairs of circle bundles.
The calculation over $S^2 \vee S^2$ can then be done over each $S^2$ and patched together with the Mayer-Vietoris sequence.

To prove the $X=S^2$ case, it is more convenient to prove a corresponding result in homology, where we have explicit generators and can make a geometric argument.
The homology calculation is done in Lemma \ref{lem:homology}, where it is used in Lemma \ref{lem:S2} to get the final result for $S^2$.
At this point the proof is complete.
\end{proof}

Now, onto the details.
For ease of notation, we will write $c_1, \dotsc, c_n$ for the Chern classes of $E$ (as usual) and $c_{n+1}, \dotsc, c_{2n}$ for the Chern classes of $\hat E$.
We will further use the notation $\hat c_i = c_{i+n}$ for $i \in \{1, \dotsc n\}$ and $\hat c_i = c_{i-n}$ for $i \in \{n+1, \dotsc, 2n\}$.
This reflects the pairing between the $S^1$ factors of $E$ and $\hat E$ in the theorem: when we apply an automorphism to the $i$th circle factor of $E$, the pullback introduces a factor containing $\hat c_i = c_{i+n}$, whereas applying an automorphism on the $i$th factor of $\hat E$ results in a factor containing $\hat c_{i+n} = c_i$.

Note that $J$ has an action of $T^{2n}$ defined by $[e, \hat e, t] \cdot (z, \hat z) = [e \cdot z, \hat e \cdot \hat z, t]$.
By considering the $2n$ inclusions $S^1 \hookrightarrow T^{2n}$, one then has $2n$ possible $S^1$ actions on $J$.
It will be possible to consider each $S^1$ action individually:

\begin{lemma}\label{lem:XtimesS1}
Let $\alpha_i \colon J \times S^1 \to J \times S^1$ be defined as $\alpha(p,z) = (p\cdot z, z)$, where $S^1$ acts on the $i$th circle factor of $J$.
If $i \in \{1, \dotsc, 2n \}$ and $T$ is a Thom class for $J \times S^1$ that
has been pulled back from $J$, and if we have
\[
    \alpha^*T = T + j^*(\hat c_i) \otimes u,
\]
where $u \in H^1(S^1)$ is the generator, then Theorem \ref{thm:pullbackThomclass} holds.
\end{lemma}

\begin{proof}
    The automorphism $\Psi \colon J \to J$ can be factored as
\[
    J 
    \xrightarrow{\, (\id, j) \,}
    J \times X 
    \xrightarrow{\, (\id, (\psi, \hat\psi)) \,}
    J \times T^{2n} 
    \xrightarrow{\, m \,}
    J,
\]
where $m$ is the action of $T^{2n}$ on $J$.
It is then sufficient to show that
\[
    m^*T = T + \sum_{i=1}^{2n} j^*c_i \otimes y_i,
\]
where $y_1, \dotsc, y_{2n}$ are the generators of $H^1(T^{2n})$.
Indeed, if this holds then, as required,
\begin{align*}
    \Psi^*T 
        &= (\id, j)^*(\id, \psi, \hat\psi)^*m^*T \\
        &= (\id, j)^*(\id, \psi, \hat\psi)^*\bigl(T + \sum_{i=1}^{2n} j^*c_i \otimes y_i \bigr) \\
        &= (\id, j)^*\bigl(T + \sum_{i=1}^n j^*c_i \otimes \psi_i + \sum_{i=1}^n j^*\hat c_i \otimes \hat\psi_i \bigr) \\
        &= T + j^*\Bigl(\sum_{i=1}^n c_i \otimes \psi_i + \sum_{i=1}^n \hat c_i \otimes \hat\psi_i \Bigr).
\end{align*}
By additivity, one can consider a single $S^1$ action at a time, so it is sufficient to consider multiplication by a single $S^1$:
\[
    J \times S^1 
    \xrightarrow{\alpha_i}
    J \times S^1
    \xrightarrow{\, \pr_1 \,}
    J.
\]
The result now follows from the assumption made in the lemma.
\end{proof}

The following lemma details how we can reduce the proof to a calculation over $S^2$.

\begin{lemma}\label{lem:reductionS2}
If Theorem \ref{thm:pullbackThomclass} holds for $X = S^2$ equipped with the Hopf bundle and the trivial bundle, then it holds for all $X$.
\end{lemma}

\begin{proof}

By the previous lemma, we need to prove that if $\alpha \colon J \times S^1 \to J \times S^1$ is the action map for some choice of $S^1$ factor and $T \in H^3(J \times S^1)$ is pulled back from a Thom class on $J$, then
\[
    \alpha^*T = T + j^*(\hat c) \otimes u,
\]
where $\hat c$ is the `dual' of the Chern class corresponding to the chosen $S^1$-action.
Assuming that we have the result for $S^2$, we prove this with the following steps:
\begin{enumerate}

    \item Prove the $n=1$ result for $S^2 \vee S^2$ equipped with the canonical pair of $S^1$-bundles.

    \item Prove the result for the $n$-fold wedge product $\bigvee^n S^2 \vee S^2$ equipped with its canonical principal $T^n$-bundles.

    \item Prove the result for 3-dimensional CW-complexes.

    \item Obtain the result for general $X$ by restricting to its 3-skeleton.

\end{enumerate}

\textbf{Step 1:} 
Consider $S^2 \vee S^2$ with the pair of circle bundles classified by the two generators of $H^2(S^2\vee S^2) \cong H^2(S^2) \oplus H^2(S^2)$.
With $J$ as the fiberwise join of these bundles, the Mayer-Vietoris sequence
for the decomposition of $J$ into the restrictions to the two spheres gives an injective map
\[
0=H^2(J|_{\{*\}}\times S^1)\to    H^3(J \times S^1) 
    \hookrightarrow H^3(J|_{S^2} \times S^1) \oplus H^3(J|_{S^2} \times S^1).
\]
When the two bundles on $S^2 \vee S^2$ are restricted to a single sphere we obtain the Hopf bundle and the trivial bundle on $S^2$.
This implies that the $S^2 \vee S^2$ case follows the $S^2$ case.

\textbf{Step 2:}
Consider the $n$-fold wedge product $X = \bigvee^n S^2 \vee S^2$ equipped with the pair of $T^n$-bundles classified by the first and last $n$ generators of $H^2(X) \cong H^2(T^n) \oplus H^2(T^n)$, respectively.
The $n=1$ case was covered in Step 1.
The fiberwise join of the two bundles, which we denote by $J_n$, is isomorphic to a fiber product of $n$ copies of $J_1$:
\[
    J_n \cong J_1 \times_{X \times I} J_1 \times_{X \times I} \dotsm \times_{X \times I} J_1
\]
This is a specific example of the decomposition we have considered twice
already; see the discussion preceding Lemma \ref{restlemma}.

The $S^1$ action defining $\alpha_n \colon J_n \times S^1 \to J_n \times S^1$ can, without loss of generality, be chosen to be in the last $J_1$ factor of $J_n$.
Then $\alpha_n$ acts via the identity on all but the last $J_1$ factor.

The unique class in $H^3(J \times S^1)$ that has been pulled back from the Thom class for $J$ is of the form $T := \sum_{i=1}^n \pr_i^*T_1$, where $T_1 \in H^3(J_1 \times S^1)$ is the pullback of the unique Thom class on $J_1$.
These Thom classes are unique because wedges of two-spheres have trivial degree 3 cohomology.
We now have
\begin{align*}
    \alpha^*T 
    &= \alpha^*\Bigl( \sum_{i=1}^n \pr_i^*T_i \Bigr) \\
    &= \sum_{i=1}^{n-1} \pr_i^*T_1 + \alpha_1^*\pr_n^*T_1 \\
    &= \sum_{i=1}^{n-1} \pr_i^*T_1 + \pr_n^*T_1 + j^*(\hat c_n) \otimes u \\
    &= T + j^*(\hat c_n) \otimes u.
\end{align*}
Here, we have used the result for $n=1$, which we have from Step 1.
We conclude that the result holds for $\bigvee^{n}S^2 \vee S^2$ equipped with its canonical pair of $T^n$-bundles.

\textbf{Step 3:} 
Let $X$ be a 3-dimensional CW-complex equipped with two circle bundles $E$ and $\hat E$.
These are pulled back from the universal bundle $ET^n \to BT^n$ along some pair of maps $c,\hat c \colon X \to BT^n$.
Since $X$ is 3-dimensional, $c$ and $\hat c$ can be chosen so that the map $(c, \hat c)\colon X \to BT^n \times BT^n$ takes values in the 3-skeleton of $BT^n \times BT^n$, which is the same as the 2-skeleton $\bigvee^{n}S^2 \vee S^2$.
The corresponding $J$ then fits into the following pullback diagram:
\[
\begin{tikzcd}
    J 
        \rar["f"] \dar["j", swap]
    & J_n
        \dar["\pi"]
    \\
    X 
        \rar["{(c, \hat c)}"]
    & \bigvee^{n}S^2 \vee S^2
\end{tikzcd}
\]
Here, the bundle $J_n$ comes from the previous step.
Let $T_{univ} \in H^3(J_n \times S^1)$ be the pullback of the unique Thom class for $J_n$ and let $T' = (f,\id)^*T_{univ}$.
This is an element of $H^3(J \times S^1)$ that is pulled back from a Thom class for $J$.
We then calculate that
\begin{align*}
    \alpha^*T' 
    &= \alpha^*f^*T_{univ} \\
    &= f^*\alpha_{univ}^*T_{univ} \\
    &= f^*T_{univ} + f^*(\pi^*\hat c_{univ} \otimes u) \\
    &= T' + j^*(\hat c) \otimes u.
\end{align*}
We have written $\alpha_{univ}$ for the action map on $J_n$.
If $T \in H^3(J \times S^1)$ is a pullback of a Thom class for $J$, then it must be of the form $T = T' + (j, \id)^*\delta$ for some $\delta \in H^3(X \times S^1)$.
This is because any two Thom classes differ by a degree 3 cohomology class on the base space.
However, since $(j, \id) \circ \alpha = (j, \id)$, we have
\begin{align*}
    \alpha^*T 
    &= \alpha^*T' + \alpha^*(j,\id)^*\delta \\
    &= T' + j^*(\hat c) \otimes u + (j, \id)^*\delta \\
    &= T + j^*(\hat c) \otimes u,
\end{align*}
as required. 
This proves the result for 3-dimensional CW-complexes.

\textbf{Step 4:} 
Let $X^{(3)}$ be the 3-skeleton of a CW-complex $X$.
In this step, we show that if the theorem holds for $X^{(3)}$, which the previous step indeed implies, then the theorem holds for $X$ itself.
Letting $j_{(3)} \colon J^{(3)} \to X^{(3)}$ be the restriction of $J$ to $X^{(3)}$, we have the following commutative diagram with exact rows coming from \eqref{exactsequenceJ}:
\[
\begin{tikzcd}
    0 
        \rar
    & H^3(X)
        \rar["j^*"] \dar[hook]
    & H^3(J) 
        \rar \dar[hook]
    & H^3(T^n * T^n) 
        \dar["="]
    \\
    0 
        \rar
    & H^3(X^{(3)})
        \rar["j_{(3)}^*"]
    & H^3(J^{(3)}) 
        \rar
    & H^3(T^n * T^n) 
\end{tikzcd}
\]
By standard results in cellular cohomology, the vertical maps are injective.
If $T$ is a Thom class for $J$, then $\Psi^*T - T = j^*\delta$ for some difference element $\delta \in H^3(X)$.
Letting $T^{(3)}$ be the restriction of $T$ to $J^{(3)}$, we know by assumption that 
\[
    \Psi^*T^{(3)} - T^{(3)} = j_{(3)}^*\Bigl(\sum_{i=1}^n c_i \cup \hat \psi_i + \sum_{i=1}^n \hat c_i \cup \psi_i \Bigr)
\]
Injectivity of the left-most vertical map then implies that $\delta = \sum_{i=1}^n c_i \cup \hat \psi_i + \sum_{i=1}^n \hat c_i \cup \psi_i$, as required. 

With all the steps complete, we conclude that Theorem \ref{thm:pullbackThomclass} follows from the $S^2$ case.
\end{proof}

We now focus on the case $X=S^2$.
Let $j \colon J \to S^2$ be the fiberwise join of the Hopf circle bundle $H \to S^2$ and the trivial bundle $S^2 \times S^1 \to S^2$.
Elements of $J$ can be written as $[h, (x,z), t]$ with $h \in H$, $(x,z) \in S^2 \times S^1$, and $t \in [0,1]$. 
This represents the linear combination $t h + (1-t)(x,z)$ in the join of the two fibers over $x \in S^2$.
$J$ has two possible $S^1$-actions; one either acts on the fiber of the Hopf bundle or on the fiber of the trivial bundle. 
Choosing either of these actions gives the map $\alpha \colon J \times S^1 \to J\times S^1$.

To prove Theorem \ref{thm:pullbackThomclass} for $S^2$, we will determine the induced map $\alpha_*$ on the homology group $H_3(J \times S^1)$.
This is the subject of Lemma \ref{lem:homology}.
The calculation in homology was found to be easier because we have very explicit, geometric descriptions of the generators of $H_3(J \times S^1)$, which we discuss shortly.
Using the homology calculation, we then get the result in cohomology by passing to the dual.
This is done in Lemma \ref{lem:S2}.

The Leray-Serre spectral sequence for $J$ tells us that $H_3(J) \cong H_3(S^3)$ and $H_2(J) \cong H_2(S^2)$.
By the K\"unneth formula, we then have
\[
    H_3(J \times S^1) \cong H_3(S^3) \oplus H_2(S^2).
\]
The two generators of $H_3(J \times S^1)$ are $i_*[S^3]$, where $i \colon S^3 \hookrightarrow J$ is the inclusion of a fiber, and $\sigma_*[S^2 \times S^1]$, where $\sigma \colon S^2 \times S^1 \to J \times S^1$ is any section; the one we choose is $\sigma(x,\varphi) = \bigl([0, (x,1), 0], \varphi\bigr)$.

Elements of $H_3(J \times S^1)$ are all of the form $\kappa  a +\lambda b$, where $\kappa,\lambda \in \Z$, $a := i_*[S^3]$ and $b := \sigma_*[S^2 \times S^1]$.
Given such an element, one can determine $\kappa$ and $\lambda$ by taking the intersection product with $b$ and $a$, respectively.
Indeed, a fiber and the image of a section intersect transversely at a single point, so we have $a \cap b = 1$.
We further have $a \cap a = 0$ and $b \cap b = 0$, since both submanifolds
have a deformation disjoint from the submanifold.
Therefore,
\[
    (\kappa a + \lambda b) \cap b = \kappa
    \quad\text{and}\quad
    (\kappa a + \lambda b) \cap a = \lambda.
\]
Now, let us determine the induced map in homology.

\begin{lemma}\label{lem:homology}
The induced map $\alpha_* \colon H_3(J \times S^1) \to H_3(J \times S^1)$ is the identity when defined via the action on the Hopf bundle and
\[
    \alpha_*i_*[S^3] = [S^3],
    \quad
    \alpha_*\sigma_*[S^2 \times S^1] = \sigma^*[S^2 \times S^1] + i_*[S^3]
\]
when $\alpha$ is defined by acting on the trivial bundle.
\end{lemma}

\begin{proof}
First observe that $\alpha$ preserves fibers regardless of which action is used to define it, and so we have $\alpha_*i_*[S^3] = i_*[S^3]$.
We then only need to determine $\alpha_*\sigma_*[S^2 \times S^1]$ in each case.

If $\alpha$ is defined via the $S^1$-action on the Hopf bundle, then
\[
    \alpha \circ \sigma(x, \varphi)
    = \alpha\bigl( [0, [x, 1], 0], \varphi\bigr)
    = \bigl([0, [x,1], 0], \varphi\bigr)
    = \sigma(x,\varphi)
\]
So, $\alpha_*$ is just the identity in this case.

Now and for the remainder of the proof, let $\alpha$ be defined via the $S^1$-action on the trivial bundle.
The task is to identify the class $\alpha_*\sigma_*[S^2 \times S^1] \in H_3(J \times S^1)$.
As a submanifold, this consists of the points
\[
    \alpha \circ \sigma(S^2 \times S^1) 
    = \bigl\{ ([0, (x,\varphi), 0], \varphi) : (x, \varphi) \in S^2 \times S^1 \bigr\}.
\]
This transversely intersects with $i(S^3)$ and the intersection consists of a single point. 
We can then write $\alpha_*\sigma_*[S^2 \times S^1] = \lambda \sigma_*[S^2 \times S^1] + i_*[S^3]$ for some $\lambda \in \Z$.
To determine $\lambda$, we need to deform $\alpha \circ \sigma(S^2 \times S^1)$ so that it intersects transversely with $\sigma(S^2 \times S^1)$.

Let us write the sphere as the union of the north and south hemisphere, $S^2 = D^2_+ \cup D^2_-$, and elements of the disk $D^2$ with polar coordinates $x = r \omega$, where $r \in [0,1]$ and $\omega \in S^1$.
Of course, $\omega$ is not defined when $r=0$.
Define the following map:
\begin{align*}
    F \colon S^2 \times S^1 \times I &\longrightarrow J \times S^1, \\
    \bigl(x = r\omega \in D^2_+, \varphi, t \bigr) 
        &\longmapsto 
        \Bigl( 
            \bigl[ 
                (x, \omega \bigr), 
                (x, \varphi),
                t\cdot r
            \bigr], 
            \varphi 
        \Bigr),\\
    \bigl(x = r\omega \in D^2_-, \varphi, t\bigr) 
        &\longmapsto 
        \Bigl( 
            \bigl[ 
                (x, 1 \bigr), 
                (x, \varphi),
                t (1- r)
            \bigr], 
            \varphi 
        \Bigr)
\end{align*}
This is well-defined because if $x \in D^2_+ \cap D^2_-$ then $(x,z) \sim (x,1)$ in $H$, since this is the equivalence relation used to glue together $H|_{D^2_+} = D^2_+ \times S^1$ and $H|_{D^2_-} = D^2_- \times S^1$ to define $H$.
This map is a deformation from $\alpha \circ \sigma(S^2 \times S^1)$ to a new submanifold 
\[
    D := F\bigl(S^2 \times S^1 \times \{1\}\bigr).
\]
$D$ and $S := \sigma(S^2 \times S^1)$ intersect at a single point, $p := ([0, (x^+, 1), 0], 1)$, where $x^+$ is the north pole.
For this to be a transverse intersection we need that $T_pD + T_pS = T_p(J \times S^1)$.

We can restrict to $D^2_+$, noting that $J|_{D^2_+} \cong D^2_+ \times (S^1 * S^1)$.
We then have 
\[
    T(J|_{D_2^+} \times S^1) \cong TD^2 \oplus T(S^1 * S^1) \oplus TS^1.
\]
Observe that 
\[
    \sigma(D^2_+ \times S^1) \cong D^2 \times \bigl\{[0,1,0]\bigr\} \times S^1 \subseteq D^2 \times (S^1 * S^1) \times S^1.
\]
Therefore, over $J|_{D^2} \times S^1$ we have 
\[
    TS \cong TD^2 \oplus 0 \oplus TS^1 \subseteq TD^2 \oplus T(S^1 * S^1) \oplus TS^1.
\]
This says that the tangent space of $S$ fills the $D^2$ and $S^1$ directions, but is complementary to the tangent space of $S^1 *S^1$.
It now suffices to show that $T(S^1 * S^1) \subseteq TD$.

For this, consider the composition
\begin{gather*}
    D^2_+ \times S^1 
    \xrightarrow{\, F_1 \,}
    D^2_+ \times (S^1 * S^1) \times S^1
    \to
    S^1 * S^1 \\
    (r\omega, \varphi) \longmapsto [\omega, \varphi, t].
\end{gather*}
One can check that this is an embedding, for example by considering a diffeomorphism $S^1 * S^1 \cong H$ such that the above composition is a diffeomorphism between $D^2_+ \times S^1$ and $H|_{D^2_+} = D^2_+ \times S^1$.
This implies that $T_pD$ contains the tangent space at $S^1 * S^1$, as required.

We conclude that one can deform $\alpha \circ \sigma(S^2 \times S^1)$so that it transversely intersects $\sigma(S^2 \times S^1)$ at a single point.
Therefore,
\[
    \alpha_*\sigma_*[S^2 \times S^1] = \sigma_*[S^2 \times S^1] + i_*[S^3] \in H_3(J \times S^1).
\]
This completes the proof.
\end{proof}

\begin{lemma}\label{lem:S2}
Theorem \ref{thm:pullbackThomclass} holds for $S^2$ equipped with the Hopf bundle $H \to S^2$ and the trivial bundle $S^2 \times S^1 \to S^1$.
\end{lemma}

\begin{proof}
The universal coefficient theorem implies that
\begin{equation}\label{eqn:dual}
    H^3(J \times S^1) \cong \Hom\bigl(H_3(J \times S^1), \Z\bigr).
\end{equation}
$H^3(J \times S^1)$ has two generators; one is the pullback of a Thom class on $J$, and the other is $j^*(c_{H}) \otimes u$, where $c_{H} \in H^2(S^2)$ is the generator (also the Chern class of the Hopf bundle).
The right-hand side also has two generators, $a^*$ and $b^*$ defined by
\[
a^*(i_*[S^3]) = 1, \quad a^*(\sigma_*[S^2 \times S^1]) = 0,
\]\[
b^*(i_*[S^3]) = 0, \quad b^*(\sigma_*[S^2 \times S^1]) = 1.
\]
Under \eqref{eqn:dual}, the Thom class maps to $a^*$ and $j^*(c) \otimes u$ maps to $b^*$.

The map that $\alpha$ induces on $H_3(J \times S^1)$ induces a map on $\Hom(H_3(J \times S^1), \Z)$ that is compatible via \eqref{eqn:dual} with the map $\alpha$ induces on cohomology.
Therefore the map on homology, which we know from Lemma \ref{lem:homology}, determines the map on cohomology.

When $\alpha$ is defined via the action on the Hopf bundle, $\alpha_*$ is the identity.
Therefore the induced map on cohomology is also the identity.
This is what we desire in this case because the Chern class of the trivial bundle is zero.
When $\alpha$ is defined via the action on the trivial bundle, we have
\[
    a^* \bigl( \alpha_*i_*[S^3] \bigr)
    =  a^*\bigl(i_*[S^3]\bigr) = 1
    \quad\text{and}\quad
\]
\[
    a^* \bigl(\alpha_*\sigma_*[S^2 \times S^1] \bigr)
    = a^*\bigl(\sigma_*[S^2 \times S^1] + i_*[S^3]\bigr)
    = 1.
\]
So $\alpha^*(a^*) = a^* + b^*$.
Therefore, if $T \in H^3(J \times S^1)$ is pulled back from a Thom class for $J$, then
\[
    \alpha^*(T) = T + j^*(c_H) \otimes u.
\]
This completes the proof.
\end{proof}

We finish the section with a comparison to the original proof by Bunke, Rumpf, and Schick \cite{BunkeRumpfSchick}*{Proposition 7.31}.
In their paper, they construct a classifying space $R_n$ for T-duality triples so that the data of a T-duality triple (including the bundles and twists) is stored in a map $X \to R_n$.

A pair of bundles $E$ and $\hat E$ with automorphisms $\psi$ and $\hat \psi$  are classified by a map $H \colon X \times [0,1] \to K(\Z^n,2) \times K(\Z^n,2)$.
Consider the following diagram:
\[
\begin{tikzcd}
    X
        \rar["f"] \dar["i_0"]
    & R_n 
        \dar \\
    X \times [0,1]
        \rar["H"] \ar[ru, "\tilde H"]
    & K(\Z^n, 2) \times K(\Z^n,2)
\end{tikzcd}
\]
The authors start with a T-duality triple; let $f$ be its classifying map.
The lift $\tilde H$ essentially defines a homotopy between the given T-duality triple and the pullback of said triple along the automorphisms $\psi$ and $\hat \psi$.
By general theory about the classification of lifts, there is an element $X \to K(\Z^n, 3)$ measuring the difference between the two maps into $R_n$.
Using some results stated (but not proved) in the appendix, they have a specific description of this difference element and manage to show that it is homotopic to the map classifying $\sum_{i=1}^n c_i \cup \hat\psi_i + \sum_{i=1}^n \hat c_i \cup \psi_i$, as required.

The advantage and disadvantage of such homotopy theoretic proofs is that they are abstract and hide a lot of the underlying structure of the objects being worked with, in this case T-duality triples.
In comparison, our proof uses explicit calculations on the effect of the automorphisms $\psi$ and $\hat \psi$ on the Thom class.
It is more transparent in the sense that we are working directly with the objects being studied in the proof instead of homotopy classes of maps that classify them.

\section{A local definition of T-duality}
\label{sec:localtduality}

A major motivation for our definition of T-duality is to have a definition without any global assumptions on the H-flux. 
This allows a local definition of T-duality, like in \cite{BSS}. 
In their paper, a T-duality diagram (for circle bundles) is a diagram that is locally isomorphic to the universal T-duality diagram:
\begin{equation}\label{universal}
\begin{tikzcd}[column sep={4.5em,between origins},row sep=1.5em]
& 0 \arrow[ld] \arrow[rd] \arrow[rr, "u"] & & (\pi,\id)^*\hat G \arrow[ld] \arrow[rd] & \\
0 \arrow[rd] & & ES^1 \times S^1 \arrow[ld] \arrow[rd, "{(\pi,\id)}"] & & \hat G \arrow[ld] \\
& ES^1 \arrow[rd, "\pi"] & & BS^1 \times S^1 \arrow[ld] & \\
& & BS^1 & &
\end{tikzcd}
\end{equation}
Here $\pi \colon ES^1 \to BS^1$ is the universal principal $S^1$-bundle and
$\hat G$ is the unique (up to isomorphism) gerbe with Dixmier-Douady class $x \otimes \hat y \in H^3(BS^1 \times S^1)$, where $x$ and $\hat y$ are the generators of $H^2(BS^1)$ and $H^1(S^1)$, respectively.
With the global H-flux assumptions disposed of, we can generalise this definition to T-duality for principal $T^n$-bundles.
Consider the $n$-fold product of diagram \eqref{universal}:
\begin{equation}\label{universaltorus}
\begin{tikzcd}[column sep={4.5em,between origins},row sep=1.5em]
    & 0 
        \arrow[ld] \arrow[rd] \arrow[rr, "u_n"] 
    && (\pi,\id)^{n*} G_n 
        \arrow[ld] \arrow[rd]  
    & \\
    0 
        \arrow[rd] 
    & & ET^n \times T^n 
        \arrow[ld] \arrow[rd, "{(\pi,\id)^n}"] 
    & & G_n 
        \arrow[ld] \\
    & ET^n 
        \arrow[rd, "\pi^n"] 
    & & BT^n \times T^n
        \arrow[ld] 
    & \\
    & & BT^n & &
\end{tikzcd}
\end{equation}
Note the three types of products: Cartesian products of spaces, products of gerbes, and the product of gerbe isomorphisms.
We have $[G_n] = \sum_{i=1}^n x_i \otimes \hat y_i$, where $x_i$ and $\hat y_i$ are the generators of $H^2(BT^n)$ and $H^1(T^n)$, respectively.
This is the universal T-duality diagram for torus T-duality.
We have a third definition of T-duality:

\begin{definition}\label{localdefn}
A \emph{T-duality diagram} is a diagram of the form \eqref{Tdiagram} such that for each $x \in X$ there exists an open neighbourhood $U$ of $x$ together with a map $f \colon U \to BT^n$ such that the restriction of \eqref{Tdiagram} is isomorphic to the pullback of the universal diagram along $f$.
\end{definition}

This definition can be easily adapted to a definition for stacks, as was done in \cite{BSS}.
The following result is a generalisation of \cite{BSS}*{Lemma 4.1.5} to torus T-duality over spaces.
We assume that our base spaces have the homotopy type of a CW complex.

\begin{theorem}
Definition \ref{localdefn} is equivalent to Definition \ref{defn}
\end{theorem}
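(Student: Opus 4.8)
The plan is to prove the two implications separately, using the universal torus diagram \eqref{universaltorus} as the model, and I begin by checking that \eqref{universaltorus} is itself a T-duality diagram in the sense of Definition \ref{defn}. By construction it is the $n$-fold Cartesian product of the universal circle diagram \eqref{universal}, whose isomorphism $u$ has fiberwise Poincar\'e class $y\cup\hat y$ by the circle case of \cite{BSS, BunkeSchick05}. Since the Dixmier--Douady data of a tensor (external) product of gerbes is the sum of the pullbacks, and $u_n$ restricts on a fiber $T^n\times T^n\cong(T\times T)^n$ to the external product of the factorwise isomorphisms, its Poincar\'e class is $\sum_{i=1}^n y_i\cup\hat y_i$. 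The gerbe $\hat G_n$ restricts to $\sum_i x_i|_b\otimes\hat y_i=0$ on each fiber, so it is fiberwise trivial. Hence \eqref{universaltorus} satisfies the Poincar\'e bundle condition.

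For \ref{localdefn}$\Rightarrow$\ref{defn} I would argue pointwise, since both the fiber-triviality assumption and the Poincar\'e bundle condition only refer to the fiber over a single $x\in X$. If the diagram restricted to a neighbourhood $U\ni x$ is isomorphic to the pullback of \eqref{universaltorus} along some $f\colon U\to BT^n$, then the fiber over $x$ is identified with the fiber of the universal diagram over $f(x)$, and this identification carries $u(x)$ to the universal Poincar\'e class $\sum_i y_i\cup\hat y_i$ while carrying the fiber restriction of $\hat G$ to the trivial one. As $x$ was arbitrary, the diagram satisfies Definition \ref{defn}.

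The substantive direction is \ref{defn}$\Rightarrow$\ref{localdefn}. Fix $x\in X$ and choose a neighbourhood $U$ over which $\hat E$ trivialises, fixing $\hat E|_U\cong U\times T^n$; a classifying map $f\colon U\to BT^n$ for $E|_U$ then satisfies $f^*ET^n\cong E|_U$ and $f^*(BT^n\times T^n)\cong\hat E|_U$, so the two bundles already agree with the universal pullback. To match the H-flux I invoke Theorem \ref{conditiontheorem}: the now-automatic global condition gives $[\hat G]^{2,1}=\sum_i\hat y_i\otimes c_i$, which is exactly the leading part of the pulled-back universal gerbe, whose class is $\sum_i f^*(x_i)\otimes\hat y_i=\sum_i c_i\otimes\hat y_i$. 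After shrinking $U$ so that the remaining, lower-filtration components of $[\hat G|_U]$ vanish (the purely fiberwise part being killed by the fiber-triviality hypothesis), the two gerbes on $U\times T^n$ have equal Dixmier--Douady class and are isomorphic. With the bundles and gerbes thus identified, $u$ and $f^*u_n$ become automorphisms of the same trivial gerbe whose Poincar\'e classes both equal $\sum_i y_i\cup\hat y_i$; they therefore differ by an element of $H^2(T^n)\oplus H^2(T^n)$, which is precisely the indeterminacy absorbed by re-choosing the trivialisations in the diagram, so the two diagrams become isomorphic over $U$.

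The step I expect to be the main obstacle is this final matching at the level of honest gerbe isomorphisms rather than cohomology classes: one must promote the equality of Dixmier--Douady classes and of Poincar\'e classes to a single isomorphism of the whole diagram \eqref{Tdiagram}, compatible with every projection and with $u$. The delicate point is to control the non-leading filtration terms of $[\hat G|_U]$ while retaining the leading Chern-class data that identifies $f$, and it is exactly here that Theorem \ref{conditiontheorem} --- the removal of the global hypotheses --- is what makes the fiberwise Poincar\'e condition alone sufficient to reconstruct the local universal model.
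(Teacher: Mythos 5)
Your route is genuinely different from the paper's: the paper simply defers to the proof of \cite[Lemma 4.1.5]{BSS}, observing that the only apparent gap there --- that the global conditions \eqref{condition} on the flux are never verified --- is closed by Theorem \ref{conditiontheorem}, and that the argument then carries over to tori. You instead give a direct two-sided verification, and the skeleton is right: the implication from Definition \ref{localdefn} to Definition \ref{defn} is indeed purely fiberwise once one checks that \eqref{universaltorus} itself satisfies the Poincar\'e bundle condition, and the converse is proved by trivialising everything over a small neighbourhood and using the torsor structure on gerbe trivialisations to absorb the $H^2(T^n)\oplus H^2(T^n)$ ambiguity so that $u$ and $f^*u_n$ acquire equal classes.

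Two points need repair. First, your appeal to Theorem \ref{conditiontheorem} in the hard direction is misplaced: over a contractible $U$ the leading part $\sum_{i} c_i|_U\otimes\hat y_i$ vanishes along with all lower filtration pieces, since every positive-degree class of $U$ dies; what makes $[\hat G|_U]$ and $[f^*\hat G_n]$ both trivial is fiberwise triviality together with contractibility of $U$, not the global condition. Your instruction to ``shrink $U$ so that the lower-filtration components vanish while retaining the leading Chern-class data'' is internally inconsistent --- shrinking kills the leading part too, and $f|_U$ becomes null-homotopic --- so the closing claim that Theorem \ref{conditiontheorem} is what makes the fiberwise condition sufficient inverts the logic: the local equivalence needs no global input, and the global theorem is only needed to compare with the original definition of \cite{BunkeRumpfSchick}, which is exactly the role the paper assigns to it. Second, you flag but do not carry out the promotion from equality of Dixmier--Douady and Poincar\'e classes to an isomorphism of the whole diagram \eqref{Tdiagram}; this is where the axiomatics of twists must be invoked (isomorphism classes of gerbes are classified by $H^3$, and isomorphisms between two fixed gerbes form an $H^2$-torsor, so agreement of classes can be lifted to actual morphisms after adjusting by the torsor action). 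With those two adjustments your argument closes, and it has the merit of reproving the circle-case lemma of \cite{BSS} rather than citing it.
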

\begin{proof}
See the proof in \cite{BSS}*{Lemma 4.1.5}, in particular the first paragraph.
The authors intend to prove that their definition is equivalent to the one in \cite{BunkeRumpfSchick} but do not mention the global conditions on the flux.
We now know that these conditions are not necessary, so the proof is complete (and indeed holds in the torus case).
\end{proof}

\section{The T-duality transformation}\label{sec:Ttransformation}

T-duality is a relation between two space-time models that may appear very different but are in fact physically equivalent.
This implies a correspondence between the topological information describing a model and its T-dual.
In particular, there must be a correspondence between any cohomology groups that carry physical information about the space-time model. 
For this reason, any formulation of topological T-duality must come equipped with a T-duality transformation:
Given two T-dual pairs $(E, G)$ and $(\hat E, \hat P)$, there is an isomorphism
\[
    K^*(E, G) \cong K^{*-n}(\hat E, \hat G)
\]
between the $G$-twisted K-theory of $E$ and the $\hat G$-twisted K-theory of $\hat E$.
In this section, we will define a T-duality transformation using the Thom class definition of T-duality triples (Definition \ref{def:ThomTduality}).

Let $\bigl( (E, G), (\hat E, \hat G), G_J \bigr)$ be a T-duality triple, as per Definition \ref{def:ThomTduality}.
By Proposition \ref{prop:bijection}, there is a gerbe isomorphism $u\colon p^*P \to \hat p^* \hat P$ corresponding to $G_J$.
As a reminder, this is obtained by using a homotopy between the two maps
\[
    E \times_X \hat E \to E \hookrightarrow J
    \quad \text{and} \quad
    E \times_X \hat E \to \hat E \hookrightarrow J,
\]
noting that the pullback of $G_J$ along these two maps is $p^*G$ and $\hat p^* \hat G$, respectively.

\begin{definition}
Let $\bigl( (E, G), (\hat E, \hat G), G_J \bigr)$ be a T-duality triple.
The T-duality transformation is the composition
\[
    K^*(E, G)
    \xrightarrow{p^*} K^*(E \times_X \hat E, p^*G)
    \xrightarrow{u^*} K^*(E \times_X \hat E, \hat p^*\hat G)
    \xrightarrow{\hat p_!} K^{*-n}(\hat E, \hat G),
\]
where $u^*$ is the isomorphism induced by the twist morphism $u \colon p^*G \to \hat p^* \hat G$ corresponding to $G_J$ and $\hat p_!$ is the push-forward along $\hat p \colon E \times_X \hat E \to \hat E$.
\end{definition}

It is clear that this definition agrees with the T-duality transformation defined in \cite{BunkeRumpfSchick}*{Definition 3.12}, because the correspondence between gerbes $G_J$ and twist morphisms $u \colon p^*P \to \hat p^*\hat P$ is precisely the one used to show that the two definitions of T-duality are equivalent. 
Therefore, the following theorem is a result of the same statement being true in \cite{BunkeRumpfSchick}*{Theorem 3.13}.

\begin{theorem}
If $X$ is a finite CW-complex, then the T-duality transformation is an isomorphism.
\end{theorem}

An interesting application is the following result:

\begin{corollary}
Let $\pi \colon E \to X$ be a principal $T^n$-bundle with Chern classes $c_1, \dotsc, c_n \in H^2(X)$.
Then there is an isomorphism
\[
    K^*(E) \cong K^{*-n}(X \times T^n, G), 
\]
where $G$ is the gerbe with Dixmier-Douady class 
\[
    \sum_{i=1}^n c_i \otimes y_i \in H^2(X) \otimes H^1(T^n) \hookrightarrow H^3(X \times T^n),
\]
with $y_1, \dotsc, y_n$ being the generators of $H^2(T^n)$.
\end{corollary}

\begin{proof}
This follows from $(E,0)$ being T-dual to $(X \times T^n, G)$.
\end{proof}

We finish with some examples:

\begin{example}
Let $\Sigma_g$ denote the surface of genus $g$.
There is a canonical circle bundle $E_g \to \Sigma_g$ consisting of unit tangent vectors. We calculate its K-theory.
By the T-duality isomorphism, we have
\[
  K^*(E_g) \cong K^{*-1}(
  \Sigma_g \times S^1, \chi_g).
\]
Here, we note that $H^3(\Sigma_g \times S^1) \cong H^2(S_g) \cong \Z$, so we write our twists as integers, and $\chi_g := \chi_g(\Sigma_g) = 2-2g$ is the Euler characteristic of the surface. 
This appears as the twist because the Euler class is equal to the top Chern class, which in this case is the first Chern class of $E_g$.

We can calculate $K^{*-1}(\Sigma_g \times S^1, \chi_g)$ using the Atiyah-Hirzebruch spectral sequence.
The sequence stabilises after the third page, where the differential is given by $\operatorname{Sq}^3 - \chi_g$, where $\operatorname{Sq}^3$ is the third Steenrod operation. 
$\operatorname{Sq}^3$ factors as
\[
    H^0(\Sigma_g \times S^1;\Z)
    \to H^0(\Sigma_g \times S^1; \Z_2)
    \xrightarrow{\operatorname{Sq}^2} H^2(\Sigma_g \times S^1; \Z_2)
    \to H^3(\Sigma_g \times S^1; \Z),
\]
where the first map is reduction mod 2 and the last map is the Bockstein homomorphism.
This final map is actually $\Z_2^{2g+1} \to \Z$, which is necessarily trivial.
Therefore, $\operatorname{Sq}^3$ is the trivial map and the non-trivial differentials on the third page are given by multiplication by $\chi_g$.
The spectral sequence is easily shown to yield
\[
  K^0(
  \Sigma_g \times S^1, \chi_g) \cong \Z/|\chi_g|\Z
    \,\, \text{and} \,\,
    K^1(\Sigma_g \times S^1, \chi_g) \cong \Z^{2g+1}.
\]
Therefore,
\[
    K^0(E_g) \cong \Z^{2g+1}
    \,\, \text{and} \,\,
    K^1(E_g) \cong \Z/|\chi_g|\Z.
\]
More generally, the T-duality isomorphism applies to 3-manifolds that are circle bundles over surfaces.
If $E_{g,k} \to \Sigma_g$ is the circle bundle with Chern class $k \in \Z \cong H^2(\Sigma_g;\Z)$, then
\[
    K^*(E_{g,k}, \ell) \cong K^{*-1}(E_{g,\ell}, k).
\]
We write the twists as integers because the degree 3 cohomology of a connected oriented
3-manifold is identified with $\Z$.
\end{example}

\begin{example}

Let $(M, \eta)$ be a compact regular contact manifold of dimension $2n+1$.
By the Boothby-Wang theorem \cite{Blair}*{\textsection 3.3}, $M$ is the total space of a principal $S^1$-bundle over a symplectic manifold $(N, \omega)$, where the symplectic form $\omega$ is classified by an integral cohomology class; indeed, it is the class classifying the principal bundle.
The T-duality isomorphism gives
\[
    K^*(M) \cong K^{*-1}(N \times S^1, \tau),
\]
where $\tau$ is a twist classified by $[\pr_1^*\omega] \in H^3(N \times S^1 ; \Z)$.
Thus we have a relationship between the K-theory of a compact regular contact manifold and its associated symplectic manifold.
\end{example}

\begin{example}
We present an example of how the non-uniqueness of T-duals could give interesting isomorphisms.
Suppose that $(E, G)$ and $(E, G')$ are both T-dual to $(\hat E, \hat G)$, then
\[
    K^*(E, G) 
    \cong K^{*-1}(\hat E, \hat G)
    \cong K^*(E, G').
\]
By Theorem \ref{thm:uniqueness2}, we know that this only occurs when $G' \cong G \otimes \pi^*Q$, for some $[Q] \in \ker(\hat \pi^*)$. 
We find that
\[
    K^*(E, G) \cong K^*(E, G \otimes \pi^*Q).
\]
for all $[Q] \in \ker(\hat \pi^*)$.
So, we have identified an orbit of twists that produce isomorphic twisted K-groups.
\end{example}


\begin{bibdiv}
\begin{biblist}*{labels={alphabetic}}

\bib{Blair}{book}{
    Author = {Blair, David E.},
    Title = {Riemannian geometry of contact and symplectic manifolds},
    Edition = {2nd ed.},
    Series = {Prog. Math.},
    ISSN = {0743-1643},
    Volume = {203},
    ISBN = {978-0-8176-4958-6; 978-0-8176-4959-3},
    Year = {2010},
    Publisher = {Boston, MA: Birkh{\"a}user},
    Language = {English},
    DOI = {10.1007/978-0-8176-4959-3},
}

\bib{BEM}{article}{
   author={Bouwknegt, Peter},
   author = {Evslin, Jarah},
   author = {Mathai, Varghese},
   title={$T$-duality: topology change from $H$-flux},
   journal={Comm. Math. Phys.},
   volume={249},
   date={2004},
   number={2},
   pages={383--415},
   issn={0010-3616},
   review={\MR{2080959}},
   doi={10.1007/s00220-004-1115-6},
}

\bib{BHM}{article}{
   author={Bouwknegt, Peter},
   author={Hannabuss, Keith},
   author = {Mathai, Varghese},
   title={$T$-duality for principal torus bundles and dimensionally reduced
   Gysin sequences},
   journal={Adv. Theor. Math. Phys.},
   volume={9},
   date={2005},
   number={5},
   pages={749--773},
   issn={1095-0761},
   review={\MR{2202291}},
}

\bib{BunkeSchick05}{article}{
   author={Bunke, Ulrich},
   author={Schick, Thomas},
   title={On the topology of $T$-duality},
   journal={Rev. Math. Phys.},
   volume={17},
   date={2005},
   number={1},
   pages={77--112},
   issn={0129-055X},
   review={\MR{2130624}},
   doi={10.1142/S0129055X05002315},
}

\bib{BunkeRumpfSchick}{article}{
    AUTHOR = {Bunke, Ulrich},
    AUTHOR = {Rumpf, Philipp},
    AUTHOR = {Schick, Thomas},
    TITLE = {The topology of {$T$}-duality for {$T^n$}-bundles},
    JOURNAL = {Rev. Math. Phys.},
    VOLUME = {18},
    YEAR = {2006},
    NUMBER = {10},
    PAGES = {1103--1154},
    ISSN = {0129-055X},
    DOI = {10.1142/S0129055X06002875},
    URL = {https://doi.org/10.1142/S0129055X06002875},
}

\bib{BSS}{article}{
   author={Bunke, Ulrich},
   author= {Schick, Thomas},
   author = {Spitzweck, Markus},
   title={Periodic twisted cohomology and $T$-duality},
   language={English, with English and French summaries},
   journal={Ast\'erisque},
   number={337},
   date={2011},
   pages={vi+134},
   issn={0303-1179},
   isbn={978-2-85629-307-2},
}

\bib{DoveSchick}{unpublished}{
  author={Dove, Tom},
  author={Schick, Thomas},
  title={Equivariant Topological T-Duality},
  note={\href{https://arxiv.org/abs/2310.06064}{arXiv:2310.06064}. Accepted for publication in Communications in Mathematical Physics},
  date={2024},
}

\bib{MathaiRosenberg05}{article}{
    Author = {Mathai, Varghese},
    Author = {Rosenberg, Jonathan},
    Title = {{{\(T\)}}-duality for torus bundles with {{\(H\)}}-fluxes via noncommutative topology},
    Journal = {Commun. Math. Phys.},
    ISSN = {0010-3616},
    Volume = {253},
    Number = {3},
    Pages = {705--721},
    Year = {2005},
    Language = {English},
    DOI = {10.1007/s00220-004-1159-7},
}

\bib{schneiderthesis}{thesis}{
  author = {Schneider, Ansgar}, 
  title = {Die Lokale Struktur von T-Dualit\"atstripeln},
  school = {Georg-August-Universit\"at G\"ottingen},
  year = {2007},
  address = {G\"ottingen, Germany},
  note={\href{https://arxiv.org/abs/0712.0260}{https://arxiv.org/abs/0712.0260}},
  url = {https://arxiv.org/abs/0712.0260v1}
}

\bib{mirror}{article}{
    author = {Strominger, Andrew},
    author = {Yau, Shing-Tung},
    author = {Zaslow, Eric},
     title = {Mirror symmetry is {$T$}-duality},
   journal = {Nuclear Phys. B},
    volume = {479},
      year = {1996},
    number = {1-2},
     pages = {243--259},
      issn = {0550-3213},
       DOI = {10.1016/0550-3213(96)00434-8},
       URL = {https://doi.org/10.1016/0550-3213(96)00434-8},
}

\end{biblist}
\end{bibdiv}

\end{document}